%&latex
%%%%%%%%%%%%%%%%%%%%%%%%%%%%%%%%%%%%%%%%%%%%%%%%%%%%%%%%%%%%%%%%%%%%%%%%%%%%%%+
%%(The above string of %s is 78 characters long, followed by a + and a RET )
%%  Contents:  Tex Source for notes on projective curves, contact, and null
%%     Robert L. Bryant        bryant@math.duke.edu         December 2017
%%%%%%%%%%%%%%%%%%%%%%%%%%%%%%%%%%%%%%%%%%%%%%%%%%%%%%%%%%%%%%%%%%%%%%%%%%%%%%+

% Note:  This is latex2e with the amsart package

\documentclass[10pt]{amsart}
\usepackage[pagebackref]{hyperref}

%exterior derivative

\newcommand{\bbC}{{\mathbb C}}

\newcommand{\bbP}{{\mathbb P}}
\newcommand{\bbQ}{{\mathbb Q}}
\newcommand{\bbR}{{\mathbb R}}
 % to fix supscript spacing

\newcommand{\GL}{\operatorname{GL}}

\newcommand{\SL}{\operatorname{SL}}
\newcommand{\SO}{\operatorname{SO}}

\newcommand{\Sp}{\operatorname{Sp}}

\newcommand{\la}{\langle}
\newcommand{\ra}{\rangle}

\newcommand{\w}{{\mathchoice{\,{\scriptstyle\wedge}\,}{{\scriptstyle\wedge}}
      {{\scriptscriptstyle\wedge}}{{\scriptscriptstyle\wedge}}}}

\newcommand{\be}{\begin{equation}}
\newcommand{\ee}{\end{equation}}
\newcommand{\bpm}{\begin{pmatrix}}
\newcommand{\epm}{\end{pmatrix}}

\numberwithin{equation}{section}

\newtheorem{proposition}{Proposition}
\newtheorem{corollary}{Corollary}

\theoremstyle{remark}

\newtheorem{remark}{Remark}
\newtheorem{example}{Example}

\begin{document}

\author[R. Bryant]{Robert L. Bryant}
\address{Duke University Mathematics Department\\
         PO Box 90320\\
         Durham, NC 27708-0320}
\email{\href{mailto:bryant@math.duke.edu}{bryant@math.duke.edu}}
\urladdr{\href{http://www.math.duke.edu/~bryant}%
         {http://www.math.duke.edu/\lower3pt\hbox{\symbol{'176}}bryant}}

\title[Projective, contact, and null curves]
      {Notes on projective, contact, and null curves}

\date{December 16, 2017}

\begin{abstract}
These are notes on some algebraic geometry 
of complex projective curves,
together with an application to studying the contact
curves in~$\bbP^3$ and the null curves 
in the complex quadric~$\bbQ^3\subset\bbP^4$,
related by the well-known Klein correspondence.
Most of this note consists of recounting the classical background. 
The main application is the explicit classification
of rational null curves of low degree in~$\bbQ^3$.

I have recently received a number of requests for these notes,
so I am posting them to make them generally available.

\end{abstract}

\subjclass{
  14H50% Algebraic Geometry > curves > plane and space curves
}

\keywords{algebraic curves, contact curves, null curves}

\thanks{
Thanks to Duke University for its support via a research grant,
during which these notes were written. 
\hfill\break
\hspace*{\parindent}
}

\maketitle

\setcounter{tocdepth}{2}
\tableofcontents

\section{Introduction}\label{sec: intro}

These are notes containing 
the details of the proof of my claim~\cite{Bryant3} 
that there are no unbranched rational null curves~$\gamma:\bbP^1\to\bbC^3$ 
with simple poles and having total degree~$5$ or~$7$.
Claims to the contrary that have been made in the literature
(c.f., \cite{PandX1}) are in error.

Along the way, I explain a few other results of interest. 
Mostly these are consequences of the results in~\cite{Bryant3}.
Some of this material has, in the meantime, 
been rediscovered by others~\cite{BandW1, BandW2}.

For the convenience of the reader, 
I include some discussion of the algebraic geometry
of projective curves.  All of this material is classical~\cite{GandH}.

\section{Invariants of projective curves}
Let $V$ be a complex vector space of dimension $n{+}1\ge 2$,
and let $\bbP(V)$ be its projectivization.  
When $V$ is clear from context, 
I will write~$\bbP^n$ for~$\bbP(V)$.

Let $S$ be a connected Riemann surface 
and let $f:S\to\bbP^n = \bbP(\bbC^{n+1})$ 
be a \emph{nondegenerate} holomorphic curve, 
i.e., $f(S)$ does not lie in any proper hyperplane $H^{n-1}\subset\bbP^n$.

When $S$ is compact, the \emph{degree} of $f$, $\deg(f)$, 
is the number of points in the pre-image $f^{-1}(H)\subset S$
where $H\subset \bbP^n$ is any hyperplane that is nowhere tangent to~$f$.
When $f:S\to\bbP^n$ is nondegenerate, one knows that $\deg(f)\ge n$. 

\subsection{Ramification}
Given $p\in S$, one can write 
\be\label{eq: f-expansion}
f = \left[h_0\,v^0 + h_1\,v^1 + \cdots h_n\,v^n\right]
\ee
for some basis $v^0,\ldots, v^n$ of $V$ 
where the $h_i$ are meromorphic functions on~$S$ 
that satisfy 
\be\label{eq: h-orders-for-f}
0 = \nu_p(h_0) < \nu_p(h_1) < \cdots < \nu_n(h_n),
\ee
where $\nu_p(h_i)$ is the order of vanishing of~$h_i$ at~$p\in S$.
The numbers $a_i(p) = \nu_p(h_i)\ge i$ for $0\le i\le n$ 
depend only on $f$ and $p$, 
not on the choice of basis $v^i$ and meromorphic functions~$h_i$
satisfying~\eqref{eq: f-expansion} and \eqref{eq: h-orders-for-f}.

For all but a closed, discrete set of points $p\in S$, 
one will have $a_i(p) = i$ for $0\le i \le n$. 
It is useful to define, for $i\ge 1$, 
$$
r_i(p) = a_i(p) - a_{i-1}(p) - 1 \ge 0,
$$
which is known as the $i$-th \emph{ramification degree of~$f$ at~$p$}.  
When $f$ is not clear from context, I will write $r_i(p,f)$. 

Since $r_i(p)=0$ for $0\le i\le n$
for all but a closed, discrete set of points~$p\in S$, 
one can define the \emph{$i$-th ramification divisor} of~$f$ 
to be the locally finite formal sum
\be\label{eq: ram-divs}
R_i(f) = \sum_{p\in S}\ r_i(p,f)\,\cdot p\,.
\ee
When $S$ is compact, this is a finite sum, 
in which case, $R_i(f)$ is an effective divisor on~$S$. 

\begin{remark}[Branch points]
A point $p\in S$ at which $r_1(p,f)>0$ is said to be 
a \emph{branch point} of~$f$ of order $r_1(p,f)$.
When $R_1(f) = 0$, $f$ is said to be \emph{unbranched},
which is equivalent to $f$ being an immersion.
\end{remark} 

\subsection{The associated curves}
Since $f$ is nondegenerate, 
there is a well-defined sequence of \emph{associated curves},
$f_k:S\to\bbP\bigl(\Lambda^k(V)\bigr)$ for $1\le k \le n$,
defined, relative to any local holomorphic coordinate $z:U\to\bbC$
where $U\subset S$ is an open set, by
$$
f_k=
\left[F\wedge\frac{dF}{dz}\wedge\cdots\wedge\frac{d^{k-1}F}{dz^{k-1}}\right]
$$
where $F:U\to V$ is holomorphic and non-vanishing 
and $f = [F]$ on $U\subset S$.  (It is easy to show that $f_k$ 
is well-defined, independent of the choice of $z$ or $F$.)
Of course, $f_1 = f$.

\begin{remark}[Wronskians]\label{rem: Wronsk}
If $h_1,\ldots, h_k$ are meromorphic functions on a connected
Riemann surface~$S$ and $z:U\to\bbC$ is a local holomorphic 
coordinate on $U\subset S$, then the \emph{Wronskian differential}
of $(h_1,\ldots,h_k)$ is the expression
$$
W(h_1,\ldots,h_k) 
= \det\begin{pmatrix}
h_1& h_2 & \cdots & h_k\\
h_1^{(1)} & h_2^{(1)} & \cdots & h_k^{(1)}\\
\vdots & \vdots & \ddots & \vdots\\
h_1^{(k-1)} & h_2^{(k-1)} & \cdots & h_k^{(k-1)}
 \end{pmatrix} \mathrm{d}z^{k(k-1)/2},
$$
where $h^{(j)}_k = d^jh_k/dz^j$.
It is not hard to show that $W(h_1,\ldots,h_k)$ does not depend
on the choice of local holomorphic coordinate $z$ and hence is
a globally defined (symmetric) differential on~$S$. 

The Wronskian has two important (and easily proved) properties that will
be needed in the rest of these notes.

First, if $\nu_p(h_1)< \nu_p(h_2) <\cdots < \nu_p(h_k)$, then 
$$
\nu_p\bigl(W(h_1,\ldots,h_k)\bigr) 
= \nu_p(h_1)+\cdots+\nu_p(h_k) - \tfrac12k(k{-}1).
$$

Second (and this follows easily from the first fact), 
$W(h_1,\ldots,h_k)$ vanishes identically if and only if 
the functions $h_1,\ldots,h_k$ are linearly dependent as functions on~$S$.
\end{remark}

Note that, when $f:S\to\bbP^n$ is described as in~\eqref{eq: f-expansion},
the associated curves can be written in the form
\be\label{eq: f_k-by-Wronsk}
f_k = \left[\sum_{0\le i_1 < i_2<\cdots< i_k\le n} 
W(h_{i_1},\ldots,h_{i_k})\,v^{i_1}{\w}v^{i_2}{\w}\cdots{\w}v^{i_k}\right].
\ee

\subsection{The canonical $k$-plane and line bundles}
  
Since $f_k(p)$ is the projectivization of a nonzero simple $k$-vector
for all $p\in S$, it follows that there exists a flag of subspaces
$$
\{0\}=E_0(p)\subset E_1(p)\subset\cdots\subset E_{n}(p)\subset E_{n+1}(p)=V
$$
such that $\dim E_i(p) = i$ 
and $f_i(p)= \bbP\bigl(\Lambda^i(E_i(p))\bigr)$
for all $p\in S$ and $i\ge1$.  

It is easy to show that the subset
\be
E_i = \{ (p,v)\in S\times V\ \vrule\ v\in E_i(p)\ \}
\ee
is a holomorphic $i$-plane subbundle of the trivial bundle~$E_{n+1}
=S\times V$. Since $E_{i-1}\subset E_{i}$, 
there are well-defined quotient line bundles over~$S$ 
\be\label{eq:  L_i-defined}
L_i = E_i/E_{i-1}
\ee
for $1\le i \le n{+}1$.  Note that, 
since $\Lambda^i(E_i)\simeq L_i\otimes \Lambda^{i-1}(E_{i-1})$
for $1\le i\le n{+}1$, it follows that 
\be\label{eq: trivial-tensor}
L_1\otimes L_2 \otimes \cdots \otimes L_{n+1} 
\simeq \Lambda^{n+1}(E_{n+1}) = S\times\bbC.
\ee 

Let $B \subset S\times V \times V\times\cdots\times V$ 
(with $n{+}1$ factors of~$V$) 
be the set of $(n{+}2)$-tuples~$(p,v_0,\ldots,v_n)$ 
that satisfy the conditions $p\in S$, $v_i\in E_{i+1}(p)$ 
for $0\le i\le n$, and $(v_0,\ldots,v_n)$ is a basis of~$V$.
This $B$ is a holomorphic submanifold of $S\times V^{n+1}$,
the projection $\sigma:B\to S$ onto the first factor 
is a submersion, and the $V$-valued functions $e_i:B\to V$ 
defined by~$e_i(p,v_0,\ldots,v_n) = v_i$ for $0\le i\le n$
are holomorphic.  Consequently, there are unique holomorphic
$1$-forms~$\omega^j_i$ on $B$ satisfying the structure equations
\be
\mathrm{d}e_i = e_j\,\omega^j_i,
\ee
and 
\be
\mathrm{d}\omega^j_i = -\omega^j_k\w\omega^k_i
\ee
Moreover, since, by construction,
\be
f_i\circ\sigma = [ e_0{\w}e_1{\w}\cdots{\w}e_{i-1}],
\ee
it follows that $\omega^j_i = 0$ whenever $j>i+1$
and that $\omega^{i+1}_i$ is $\sigma$-semibasic for $0\le i\le n$.

Now, for $b=(p,v_0,\ldots,v_n)\in B$, one has $v_i\in E_i(p)$
but $v_i\not\in E_{i-1}(p)$ for $i\ge1$.  
Consequently, there is a unique
linear function $\epsilon^i(b):E_i(p)\to\bbC$ that has $E_{i-1}(p)$
as its kernel and satisfies $\epsilon^i(v_i) = 1$.  Thus,
$\epsilon^i(b)$ can be regarded as a nonzero linear function 
on the line $E_i(p)/E_{i-1}(p)= L_i(p)$, and hence $\epsilon^i(b)$
is a nonzero element of the dual line $L_i(p)^*$.  In addition,
there is an element $[e_i](b)\in E_i(p)/E_{i-1}(p) = L_i(p)$ 
that is given by $[e_i(b)] = v_i\mod E_{i-1}(p)$.

With these definitions, it is not difficult to show 
that there is a well-defined section~$\rho_i$ 
of the line bundle $L_{i+1}\otimes L_i^*\otimes K$ over~$S$ 
(where $K$ is the canonical line bundle of~$S$) satisfying 
\be
\rho_i\circ\sigma = [e_{i+1}] \otimes \epsilon^i \otimes \omega^{i+1}_i\,.
\ee
Moreover, following the definitions above,
one finds that the section~$\rho_i$ 
vanishes to order $r_i(p)$ at $p\in S$.

\subsection{The compact case and divisors}
Now suppose that $S$ is compact, 
and fix a nondegenerate $f:S\to\bbP^n$, 
which will not be notated in the following discussion.

Then $L_i \simeq \mathcal{O}(-D_i)$ for $1\le i\le n$, 
where $D_i$ is a divisor on $S$, well-defined up to linear
equivalence.

From~\eqref{eq: trivial-tensor}, it then follows that
\be\label{eq: divs-sum-to-zero}
D_1+ D_2 + \cdots D_{n+1} \equiv 0,
\ee
where `$\equiv$' means linear equivalence of divisors.

Moreover, because the zero divisor of the holomorphic
section~$\rho_i$ of $L_{i+1}\otimes L_i^*\otimes K$ is $R_i$ 
for $1\le i \le n$, it follows that 
\be
R_i \equiv - D_{i+1} + D_i + K,
\ee 
where, again, $K$ is the canonical divisor of $S$.  
In particular, for $\ell>1$ we have
\be
D_{\ell} \equiv D_1 + (\ell{-}1)\,K - R_1 - R_2 -\cdots - R_{\ell-1}.
\ee

Moreover, using~\eqref{eq: divs-sum-to-zero}, 
one obtains
\be
(n{+}1)\,D_1+{{n{+}1}\choose 2}\,K 
   \equiv n\,R_1+(n{-}1)\,R_2+\cdots+R_n\,.
\ee

Since $\deg D_1  = \deg f$, taking degrees of divisors, one has 
\be\label{eq: deg-genus-ram}
(n{+}1)\,\deg f + n(n{+}1)(k{-}1) = n\,r_1+(n{-}1)\,r_2+\cdots+r_n\,
\ee
where $r_i= \deg R_i \ge 0$ and $k$ is the genus of~$S$. 

\begin{example}[Rational normal curves]
\label{ex: unramifiedcurves}
If $S$ is compact and $f:S\to\bbP^n$ is nondegenerate 
and satisfies $r_i=0$ for all~$i$, 
it follows from~\eqref{eq: deg-genus-ram} that $k=0$ 
and $\deg f = n$, so that $f(S)\subset\bbP^n$ 
is the rational normal curve of degree~$n$, i.e.,
up to projective equivalence,
\be\label{eq: ratl-normal-curve}
f = [1,z,z^2,\ldots,z^n]
\ee
where $z$ is a meromorphic function on~$S=\bbP^1$ 
with a single, simple pole.
\end{example}

To conclude this subsection, I list a few further useful facts.
First,
\be\label{eq: degree-of-f_i}
\deg f_i = \deg D_1  + \deg D_2  + \cdots + \deg D_i 
\ee
for $1\le i\le n$.

Next, the \emph{dual curve} 
$f_n:S\to\bbP\bigl(\Lambda^n(V)\bigr)=\bbP(V^*)$ 
of $f = f_1$ is nondegenerate, 
and its ramification divisors are given by
\be\label{eq: rams-of-dual-curve}
R_i(f_n) = R_{n+1-i}(f_1).
\ee
Moreover, the dual curve of $f_n$ is $f_1$, 
i.e., $(f_n)_n = f_1 = f$.

Finally, one has the following relation 
between the first ramification divisor of~$f_i$ 
and the $i$-th ramification divisor of~$f$:
\be\label{eq: branch-of-f_i}
R_1\bigl(f_i\bigr) = R_i(f).
\ee
(This follows immediately from~\eqref{eq: f_k-by-Wronsk} 
and the properties of the Wronskian.)
However, note that, in general, for $1 < i < n$,  
the higher ramification divisors of~$f_i$
cannot be computed solely in terms of the ramification divisors 
of~$f = f_1$.  In fact, the $f_i$ in this range 
need not even be nondegenerate, as will be seen.

\section{Contact curves in $\bbP^3$}

Now let $V$ have dimension~$4$ and let~$\beta\in\Lambda^2(V^*)$
be a nondegenerate $2$-form on~$V$,
i.e., $V$ is a symplectic vector space of dimension~$4$.
(Since any two nondegenerate $2$-forms on~$V$ are $\GL(V)$-equivalent,
the particular choice of~$\beta$ is not important.)
Let $\Sp(\beta)\subset \GL(V)$ denote the group of linear transformations
of~$V$ that preserve~$\beta$.  

The choice of~$\beta$ 
defines a volume form~$\Omega=\tfrac12\beta^2\in\Omega^4(V^*)$ 
on~$V$ and, because of the nondegenerate pairing
$$
\Lambda^2(V)\times \Lambda^2(V^*)\to\bbC,
$$
it also defines a subspace~$W = \beta^\perp\subset\Lambda^2(V)$ 
of dimension~$5$.

Moreover, by the usual reduction process induced 
by the $\bbC^*$-action of scalar multiplication on~$V$,
the projective space $\bbP^3 = \bbP(V)$ inherits a \emph{contact
structure}, i.e., a holomorphic $2$-plane field $C\subset T\bbP^3$
that is nowhere integrable and is invariant under the induced action 
of~$\Sp(\beta)$ on~$\bbP^3$.

A connected holomorphic curve~$f:S\to\bbP^3$ 
is said to be a \emph{contact curve}
with respect to~$\beta$ if $f'(T_pS)\subset C_{f(p)}\subset T_{f(p)}\bbP^3$
for all $p\in S$.  Equivalently, $f$ is a contact curve 
if and only if either $f$ is constant or else $f_2(S)$ 
has image in~$\bbP(W)\subset\bbP\bigl(\Lambda^2(V)\bigr)$.
If $f(S)$ does not lie in a line in~$\bbP^3$, I will say that $f$
is \emph{nonlinear}.

\begin{proposition}\label{prop: f_to_f2}
If $f:S\to\bbP^3$ is a nonlinear contact curve,
then $f$ is nondegenerate.  
Moreover, $R_1(f) = R_3(f)$,
and $f_2:S\to\bbP(W)\simeq\bbP^4$ is nondegenerate, with
$$
R_1(f_2) = R_4(f_2) = R_2(f)
\qquad\text{and}\qquad
R_2(f_2) = R_3(f_2) = R_1(f).
$$
\end{proposition}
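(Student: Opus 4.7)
The plan is to do a single local analysis at each point $p\in S$ of the expansion of $\beta(F,F')\equiv 0$ in a basis adapted to the osculating flag of $f$ at~$p$. If $f(S)$ lies in a smallest subspace $U\subset V$ of dimension $k{+}1$ (so $k=2$ or $3$, since $f$ is nonlinear), I choose a basis $v^0,\ldots,v^k$ of $U$ so that a local lift $F$ of $f$ has expansion $F=\sum_{i=0}^k h_i\,v^i$ with $\nu_p(h_0)=0<a_1<\cdots<a_k$. Writing $\beta_{ij}=\beta(v^i,v^j)$, the contact condition becomes the identity
\[
\sum_{0\le i<j\le k}\beta_{ij}\,W(h_i,h_j)\;\equiv\;0,
\]
in which, by Remark~\ref{rem: Wronsk}, the $(i,j)$ summand has order $a_i+a_j-1$ at~$p$.

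The key step is to read off this identity order by order at~$p$. If $k=2$, the three orders $a_1-1$, $a_2-1$, $a_1+a_2-1$ are pairwise distinct, so the $\beta_{ij}$ with $0\le i<j\le 2$ must all vanish; but then $\beta|_U=0$, contradicting the fact that no $3$-plane in a symplectic $4$-space is isotropic. Thus $k=3$, i.e., $f$ is nondegenerate. With $k=3$, the six orders $\{a_i+a_j-1\}$ are pairwise distinct unless $a_3=a_1+a_2$, and in the latter case precisely the summands indexed by $(0,3)$ and $(1,2)$ tie at order $a_1+a_2-1$. If all six orders were distinct, the same successive-vanishing argument would force $\beta=0$; hence $a_3=a_1+a_2$. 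The same analysis then pins down $\beta_{01}=\beta_{02}=\beta_{13}=\beta_{23}=0$ with $\beta_{03},\beta_{12}$ both nonzero. From $a_3=a_1+a_2$ we immediately read off $r_3(p)=a_3-a_2-1=a_1-1=r_1(p)$, so $R_1(f)=R_3(f)$.

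For $f_2$, formula~\eqref{eq: f_k-by-Wronsk} expresses its six coordinates in the basis $\{v^i\w v^j\}$ of $\L^2(V)$ as the Wronskians $W(h_i,h_j)$, of respective orders
\[
a_1-1,\ a_2-1,\ a_3-1,\ a_1+a_2-1,\ a_1+a_3-1,\ a_2+a_3-1
\]
at~$p$, with the third and fourth of these equal. In a basis of $W\subset\L^2(V)$ that respects the contact relation, the tied pair collapses to a single $W$-coordinate of order $a_1+a_2-1$, leaving five coordinates of $f_2$ in $\bbP(W)$ with strictly increasing orders. Thus $f_2$ is nondegenerate in $\bbP(W)\simeq\bbP^4$, and taking consecutive differences (minus~$1$) yields
\[
r_1(f_2)=r_2(f),\;\;r_2(f_2)=r_1(f),\;\;r_3(f_2)=r_1(f),\;\;r_4(f_2)=r_2(f),
\]
which sum over $p$ to the four claimed ramification divisor identities.

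The main obstacle will be the order-by-order analysis in the second paragraph: once $a_3=a_1+a_2$ and the precise shape of $\beta$ in the adapted basis have been pinned down, the remaining assertions reduce to Wronskian bookkeeping.
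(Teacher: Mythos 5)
Your strategy is essentially the paper's---a pointwise expansion in a flag-adapted basis and order-of-vanishing bookkeeping for the Wronskians---and the first half goes through: the $k=2$ analysis correctly forces nondegeneracy, and the dichotomy ``all six orders distinct $\Rightarrow\beta=0$'' correctly forces $a_3=a_1+a_2$, hence $r_3(p)=r_1(p)$ and $R_1(f)=R_3(f)$. However, the claim that the order-by-order analysis then ``pins down $\beta_{01}=\beta_{02}=\beta_{13}=\beta_{23}=0$'' in your chosen osculating basis is unjustified, and in fact false. Successive vanishing of lowest-order terms gives only $\beta_{01}=\beta_{02}=0$: once the tie occurs, the combination $\beta_{03}W(h_0,h_3)+\beta_{12}W(h_1,h_2)$ is merely required to have vanishing leading coefficient at order $a_1+a_2-1$, and its higher-order tail can cancel against $\beta_{13}W(h_1,h_3)+\beta_{23}W(h_2,h_3)$, so nothing forces $\beta_{13}=\beta_{23}=0$. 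Concretely, for $f=[1,z^p,z^q,z^{p+q}]$ with $\beta=(p{-}q)\,\xi_0{\w}\xi_3+(p{+}q)\,\xi_1{\w}\xi_2$, replacing $v^3$ by $v^3+v^2$ yields another basis adapted to the osculating flag at $z=0$ in which $\beta_{13}=p+q\neq0$. What you can legitimately extract is $\beta_{01}=\beta_{02}=0$ together with $\beta_{03}\neq0$ and $\beta_{12}\neq0$ (since $\beta{\w}\beta\neq0$). This does not affect $a_3=a_1+a_2$, but it undercuts the next step.

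The ``collapse'' of the tied pair is exactly where the remaining work lies, and it is not just bookkeeping: to read off $R_i(f_2)$ you must exhibit a basis of $W$ in which the five coordinates of $G=F\w F'$ have five \emph{distinct} orders, and in particular show that the combined coordinate has order exactly $a_1+a_2-1$ (a cancellation could a priori push it higher) and that the last two coordinates have orders exactly $2a_1+a_2-1$ and $a_1+2a_2-1$ rather than being contaminated by the higher-order tails of $W(h_0,h_3)$ and $W(h_1,h_2)$ when $\beta_{13},\beta_{23}\neq0$. The clean repair---and it is what the paper does---is to normalize $\beta$ before normalizing the flag: choose the basis simultaneously osculating and Darboux, $\beta=\xi_0{\w}\xi_3+\xi_1{\w}\xi_2$ with $(\xi_i)$ dual to $(v^i)$, which is achievable by a change of basis in $(v^1,v^2)$ preserving $0<\nu_p(h_1)<\nu_p(h_2)$. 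The contact condition then becomes the exact identity $h_3'=h_2h_1'-h_1h_2'$, so $W(h_0,h_3)=-W(h_1,h_2)$ and
$$
G = h_1'\,v^0{\w}v^1+h_2'\,v^0{\w}v^2+h_3'\,(v^0{\w}v^3-v^1{\w}v^2)
+(h_1h_3'{-}h_3h_1')\,v^1{\w}v^3+(h_2h_3'{-}h_3h_2')\,v^2{\w}v^3,
$$
whose five coefficients with respect to this explicit basis of $W$ have the pairwise distinct orders $a_1{-}1<a_2{-}1<a_1{+}a_2{-}1<2a_1{+}a_2{-}1<a_1{+}2a_2{-}1$. With that normalization in place, your final ramification count and the four divisor identities follow as you state.
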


\begin{proof}
If $f$ were degenerate, then $f(S)$ 
would be linearly full in some $\bbP^2\subset\bbP^3$, 
and hence it would be expressible
on a neighborhood of $p\in S$ in the form
$$
f = [\,v^0 + h_1\,v^1 + h_2\, v^2\,],
$$
where the $h_i$ are meromorphic functions on $S$ 
with $\nu_p(h_1) = a_1 > 0$ and $\nu_p(h_2) = a_2 > a_1$,
and with $v^0,v^1,v^2$ being linearly independent vectors in~$V$.  
If $z:U\to\bbC$ is a $p$-centered local holomorphic coordinate 
on an open $p$-neighborhood~$U\subset S$, 
and we set $\mathrm{d}h_i = h_i'\,\mathrm{d}z$,
then
$$
f_2 = [\,h_1'\,v^0\w v^1+h_2'\,v^0\w v^2+(h_1h_2'{-}h_2h_1')\,v^1\w v^2\,],
$$
where $\nu_p(h_1') = a_1{-}1 < \nu_p(h_2') = a_2{-}1 <
\nu_p(h_1h_2'{-}h_2h_1') = a_2{+}a_1{-}1$.  
It follows that the functions $h_1'$, $h_2'$, 
and $h_1h_2'{-}h_2h_1'$ are linearly independent on $U$,
implying that $f_2(S)\subset\bbP(W)$ is linearly full
in the projectivization of the span of~$\{v^0,v^1,v^2\}$, 
and hence 
that all of the $2$-vectors $\{v^0\w v^1,\,v^0\w v^2,\, v^1\w v^2\}$
must lie in~$W$.  However, this implies that $\beta$ 
vanishes on the $3$-plane spanned by~$\{v^0,v^1,v^2\}$, 
which is impossible, since $\beta$ is nondegenerate.
Thus, $f$ must be nondegenerate.    

Fix $p\in S$ and suppose that $f(p) = [v^0]$.  
Then one can choose $v^1$, $v^2$, $v^3$ in $V$ 
such that $(v^0,v^1,v^2,v^3)$ is a basis of $V$ for which
$$
\beta = \xi_0\w \xi_3 + \xi_1\w\xi_2,
$$
where $(\xi_0,\xi_1,\xi_2,\xi_3)$ is the dual basis of $V^*$ 
corresponding to $(v^0,v^1,v^2,v^3)$.  

Write
\be\label{eq: f-in-hv-basis}
f = [\,v^0 + h_1\, v^1 + h_2\, v^2 + h_3\,v^3\,]
\ee
for some meromorphic functions $h_i$ on~$S$ that vanish at~$p$ 
and select a local $p$-centered holomorphic coordinate $z:U\to \bbC$
on some $p$-neighborhood $U\subset S$. 
The condition that $f$ be contact with respect to $\beta$ 
is expressed as the equation 
\be\label{eq: h-ode}
h'_3 = h_2 h'_1 - h_1 h'_2
\ee
where $\mathrm{d}h_i = h'_i\,\mathrm{d}z$ on $U$.  

Since $f$ is nondegenerate, $h_2 h'_1 - h_1 h'_2$ 
does not vanish identically.
Hence, by making a change of basis in $(v^1,v^2)$, 
it can be assumed that $0<\nu_p(h_1)<\nu_p(h_2)$,
which, by~\eqref{eq: h-ode} and the fact that $\nu_p(h_3)>0$, 
forces 
$$
\nu_p(h_3)=\nu_p(h_1)+\nu_p(h_2).
$$
Set $a_i = \nu_p(h_i)$, so that $a_3 = a_1+a_2$ and $a_2>a_1>0$.

First, note that this implies that 
$$
r_3(p,f) = a_3{-}a_2-1 = a_1{-1} = r_1(p,f).
$$
Since this holds for all $p\in S$, it follows that $R_3(f) = R_1(f)$.

Second, the relation~\eqref{eq: h-ode} implies that
\be
\begin{aligned}
f_2 &= [\, h'_1\, v^0{\w}v^1 +  h'_2\, v^0{\w}v^1 
      +  h'_3\, (v^0{\w}v^3 {-} v^1{\w}v^2) \\
    &\qquad +  (h'_1h_3{-}h_3h'_1)\,v^1{\w}v^3 
            + (h'_2h_3{-}h'_3h_2)\,v^2{\w}v^3\,].
\end{aligned}
\ee
Now, the sequence of orders of vanishing of these five 
coefficients of the basis elements of~$W$ are five \emph{distinct} numbers:
$$
a_1{-}1 < a_2{-}1 < a_1{+}a_2{-}1 < 2a_1{+}a_2{-}1 < a_1{+}2a_2{-}1.
$$
Hence, $f_2:S\to\bbP(W)\simeq\bbP^4$ is nondegenerate 
and has the following ramification indices at $p$:
$$
\begin{aligned}
r_1(p,f_2) &= a_2{-}a_1{-1} = r_2(p,f),\\
r_2(p,f_2) &= (a_1{+}a_2){-}a_2{-1} = r_1(p,f),\\
r_3(p,f_2) &= (2a_1{+}a_2){-}(a_1{+}a_2){-1} = r_1(p,f),\\
r_4(p,f_2) &= (a_1{+}2a_2){-}(2a_1{+}a_2){-1} = r_2(p,f).
\end{aligned}
$$
Thus, $R_1(f_2)= R_4(f_2) = R_2(f)$ and $R_2(f_2)=R_3(f_2)=R_1(f)$,
as claimed.
\end{proof}

\begin{remark}
Proposition~\ref{prop: f_to_f2} was proved in~\cite{Bryant3},
though it was known classically~\cite{BandK}.
It also appears (in slightly different notation) in~\cite{BandW1},
the authors of which do not appear to have been aware of~\cite{Bryant3}.
\end{remark}

Proposition~\ref{prop: f_to_f2} also suggests a slightly more general notion 
of contact curve, which is described by the following result.

\begin{proposition}
Let $f:S\to\bbP(\bbC^4)\simeq\bbP^3$ be a nondegenerate holomorphic curve
for which $f_2:S\to\bbP(\Lambda^2(\bbC^4))\simeq\bbP^5$ is degenerate.
Then there exists a nondegenerate symplectic 
form~$\beta\in\Lambda^2((\bbC^4)^*)$, unique up to constant 
multiples, such that $f$ is a contact curve with respect to~$\beta$.
\end{proposition}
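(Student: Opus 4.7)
The plan is to recover $\beta$ as the (up-to-scalar) generator of the annihilator of the linear span of $f_2(S)$ in $\Lambda^2(V)$, to use the nondegeneracy of $f$ together with the Wronskian calculus of Remark \ref{rem: Wronsk} to rule out degenerate candidates for $\beta$, and finally to use the Pfaffian to pin down uniqueness.

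For existence, let $W\subset\Lambda^2(V)$ be the smallest linear subspace such that $f_2(S)\subset\bbP(W)$. By hypothesis $f_2$ is degenerate, so $\dim W\le 5$ and the annihilator $W^\perp\subset\Lambda^2(V^*)$ is nonzero. For any $\beta\in W^\perp$, any local non-vanishing lift $F:U\to V$ of $f$, and any local holomorphic coordinate $z$ on $U\subset S$, the condition $[F\w F']=f_2|_U\in\bbP(W)$ yields $\beta(F,F')\equiv 0$ on $U$. As in the discussion preceding Proposition \ref{prop: f_to_f2}, this is precisely the condition that $f$ be a contact curve with respect to $\beta$, so existence reduces to exhibiting a nondegenerate element of $W^\perp$.

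The step I expect to be the main obstacle is showing that every nonzero $\beta\in W^\perp$ is automatically nondegenerate. I would argue by contradiction: suppose $\beta\in W^\perp\setminus\{0\}$ is degenerate. Since $\dim V=4$, such a $\beta$ has rank exactly $2$ and is decomposable as $\beta=\alpha_1\w\alpha_2$ for linearly independent $\alpha_1,\alpha_2\in V^*$. Setting $\phi_i=\alpha_i(F)$ for $i=1,2$ gives meromorphic functions on $U$, and the local identity
\[
\beta(F,F')\,\mathrm{d}z
= \bigl(\phi_1\phi_2'-\phi_2\phi_1'\bigr)\,\mathrm{d}z
= W(\phi_1,\phi_2)
\]
identifies the contact condition $\beta(F,F')\equiv 0$ with the vanishing of the Wronskian differential $W(\phi_1,\phi_2)$. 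By Remark \ref{rem: Wronsk}, this forces $\phi_1$ and $\phi_2$ to be linearly dependent over $\bbC$ as meromorphic functions on $S$, so some nontrivial $c_1\alpha_1+c_2\alpha_2\in V^*$ vanishes on $f$, contradicting the nondegeneracy of $f$.

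For uniqueness up to scalar, I would observe that the map $\beta\mapsto\tfrac12\,\beta\w\beta$, after choosing a volume form on $V$, becomes a nondegenerate quadratic form $Q$ on $\Lambda^2(V^*)\simeq\bbC^6$ whose zero locus is precisely the set of degenerate $2$-forms. If $\dim W^\perp$ were $\ge 2$, then $Q$ restricted to any $2$-dimensional subspace of $W^\perp$ would be a quadratic form on $\bbC^2$ that must admit a nonzero isotropic vector (either because it vanishes identically or because it factors over $\bbC$ as a product of two linear forms); that vector would then be a nonzero degenerate element of $W^\perp$, contradicting the previous step. Hence $\dim W^\perp=1$, and $\beta$ is unique up to a constant multiple.
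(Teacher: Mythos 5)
Your proof is correct, but it takes a genuinely different and more invariant route than the paper's. The paper works in a local meromorphic frame adapted to a point $p$: it writes out the six Wronskian coefficients of $f_2$, compares their orders of vanishing to deduce first that $a_3=a_1+a_2$ and then that any linear relation among the coefficients must have nonzero entries $c_1$ (on $h_3'$) and $c_2$ (on $h_1h_2'-h_2h_1'$), so that the resulting form satisfies $\beta\w\beta=2c_1c_2\,\xi_0\w\xi_1\w\xi_2\w\xi_3\neq0$; uniqueness comes from there being only one such relation. You instead identify the candidate forms invariantly as $W^\perp$ for $W$ the linear span of $f_2(S)$, and your key observation --- that a degenerate nonzero $2$-form on $\bbC^4$ has rank $2$, hence is decomposable as $\alpha_1\w\alpha_2$, so that the contact condition with respect to it is exactly the vanishing of the Wronskian $W(\alpha_1(F),\alpha_2(F))$ and therefore forces $f$ into a hyperplane --- replaces the paper's order-of-vanishing bookkeeping with a single application of the linear-dependence criterion of Remark~\ref{rem: Wronsk}. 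Your uniqueness step (the Pfaffian is a nondegenerate quadratic form on $\Lambda^2((\bbC^4)^*)$, so every $2$-plane contains a nonzero isotropic, i.e.\ degenerate, vector) is likewise cleaner than the paper's. What the coordinate computation buys in exchange is the pointwise relation $a_3(p)=a_1(p)+a_2(p)$ among vanishing orders, which is the local input behind the ramification identities of Proposition~\ref{prop: f_to_f2}; your argument does not reproduce that information. One small point worth making explicit: to invoke the Wronskian criterion you should take $F$ to be a global meromorphic lift of $f$ (as the paper does), so that $\alpha_1(F)$ and $\alpha_2(F)$ are meromorphic on all of the connected surface $S$ and their linear dependence genuinely contradicts the nondegeneracy of $f$ rather than only a local statement on a chart.
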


\begin{proof}
As before, fix a point~$p\in S$ and write $f$ in the form
\be\label{eq: f-in-hv-basis}
f = [\,v^0 + h_1\, v^1 + h_2\, v^2 + h_3\,v^3\,]
\ee
for some meromorphic functions $h_i$ on~$S$ that vanish at~$p$
and satisfy 
$$
0 < a_1 = \nu_p(h_1) < a_2 = \nu_p(h_2) < a_3 = \nu_p(h_3).
$$ 
Let $z$ be a meromorphic function on~$S$ 
that has a simple zero at $p$ 
and write $\mathrm{d}h_i = h_i'\,\mathrm{d}z$. 
Then $f_2$ takes the form
\be\label{eq: f_2-expansion}
\begin{aligned}
f_2 &= [\, h_1'\,v^0\w v^1 + h_2'\,v^0\w v^2 \\
&\qquad + h_3'\,v^0\w v^3 + (h_1h_2'{-}h_2h'_1)\,v^1{\w}v^2\\
& \qquad +  (h_1h_3'{-}h_3h'_1)\,v^1{\w}v^3 
            + (h_2h_3'{-}h_3h'_2)\,v^2{\w}v^3\,],
\end{aligned}
\ee
and the orders of vanishing at $p$ 
of the mermomorphic coefficients of these terms 
in the order written in~\eqref{eq: f_2-expansion} are
\be\label{eq: initial-ineqs}
a_1{-}1 < a_2{-}1 < 
\begin{matrix} a_3{-}1\\ a_{2}{+}a_1{-}1\end{matrix}
 < a_{3}{+}a_1{-}1 < a_{3}{+}a_2{-}1.
\ee
If $a_3$ were not equal to $a_2+a_1$, then these six integers 
would be distinct, and it would follow 
that the six coefficient functions were linearly
independent as meromorphic functions on~$S$.  
In this case, $f_2$ would be linearly full in $\Lambda^2(\bbC^4)$,
contrary to hypothesis.  Thus, we must have $a_3=a_2+a_1$, and
the inequalities~\eqref{eq: initial-ineqs} become
\be\label{eq: sharp-ineqs}
a_1{-}1 < a_2{-}1 < a_{2}{+}a_1{-}1 < a_{2}{+}2a_1{-}1 < 2a_{2}{+}a_1{-}1.
\ee

Now, in order for $f_2$ to be degenerate, these six coefficients
must satisfy at least one nontrivial linear relation 
with constant coefficients.
Because of the strict inequalities~\eqref{eq: sharp-ineqs}, 
this relation cannot involve $h_1'$ or $h_2'$, so it must of the form
$$
c_1\,h_3' + c_2\,(h_1h_2'{-}h_2h'_1) + c_3\,(h_1h_3'{-}h_3h'_1)
+c_4\,(h_2h_3'{-}h_3h'_1) = 0.
$$
Moreover, again because of the strict inequalities~\eqref{eq: sharp-ineqs}, 
neither $c_1$ nor $c_2$ can vanish, and, thus, there cannot
be two independent linear relations of this kind.

Now, consider the $2$-form
$$
\beta = c_1\,\xi_0{\w}\xi_3 + c_2\,\xi_1{\w}\xi_2
       + c_3\,\xi_1{\w}\xi_3 + c_4\,\xi_2{\w}\xi_3\,,
$$
where $(\xi_0,\xi_1,\xi_2,\xi_3)$ is the dual basis in $(\bbC^4)^*$
to the basis $(v^0,v^1,v^2,v^3)$ of $\bbC^4$.  Since
$\beta\w\beta = 2c_1c_2\,\xi_0{\w}\xi_1{\w}\xi_2{\w}\xi_3\not=0$,
the $2$-form $\beta$ is nondegenerate and hence defines a symplectic
structure on $\bbC^4$.  By construction, $f_2$ lies in the
projectivization of $W\subset\Lambda^2(\bbC^4)$, the kernel of~$\beta$.
Hence, $f$ is a contact curve in the projectivization of the
symplectic space $(\bbC^4,\beta)$.

Since there is only one linear relation among the meromorphic coefficients 
appearing in $f_2$, it follows that $f_2$ lies linearly fully 
in $\bbP(W)\simeq\bbP^4$, which proves the uniqueness of~$\beta$ up to multiples.  
\end{proof}

\begin{example}[Rational contact curves of arbitrary degree]
\label{ex: ratl-contact-of-all-degrees}
Let $p$ and $q$ be relatively prime integers satisfying $0<p<q$,
and consider the curve $f:\bbP^1\to\bbP^3$, where $z$ is a meromorphic
function on~$\bbP^1$ possessing a single, simple pole at~$P$ and
a single, simple zero at~$Q$, defined by
$$
f = [\,v^0 + z^p\,v^1 + z^q\,v^2 + z^{p+q}\,v^3\,],
$$
where $v^0,v^1,v^2,v^3$ are linearly independent in~$\bbC^4$.
Computation yields
$$
\begin{aligned}
f_2 &= [\,p\,v^0{\w}v^1 + qz^{q-p}\,v^0{\w}v^2 
 + z^q\bigl((p{+}q)\,v^0{\w}v^3+(q{-}p)\,v^1{\w}v^2\bigr)\\
&\qquad{}+qz^{p+q}\,v^1{\w}v^3 + pz^{2q}\,v^2{\w}v^3\,]. 
\end{aligned}   
$$
Thus, $f$ is a contact curve for the symplectic structure
$$
\beta = (p{-}q)\,\xi_0{\w}\xi_3 + (p{+}q)\,\xi_1{\w}\xi_2\,,
$$
and one has $R_1(f) = (p{-}1)(P+Q)$, while $R_2(f)= (q{-}p{-}1)(P+Q)$.
This example, for $q=p{+}1$, appears in~\cite{Bryant3}.
\end{example}

\section{Null curves in $\bbC^3$ and $\bbQ^3$}

Endow $\mathbb{C}^3$ with a nondegenerate (complex) inner product,
which will be denoted $v{\cdot}w\in\bbC$ for $v,w\in\bbC^3$.

If $S$ is a connected Riemann surface, 
then a non-constant meromorphic curve~$\gamma:S\to\bbC^3$ 
will be said to be a \emph{null curve} 
if the meromorphic symmetric quadratic form 
$\mathrm{d}\gamma\cdot\mathrm{d}\gamma$ vanishes identically on~$S$. 

In order to treat the poles of meromorphic null curves algebraically, 
it will be useful to introduce an algebraic compactification of~$\bbC^3$.
The usual compactification that regards~$\bbC^3$ as an affine
open set in $\bbP^3$ is not useful in this context, 
since there is no natural way to extend the notion of `null' 
to the hyperplane at infinity.

Instead, one embeds $\bbC^3$ into $\bbP^4$ as a quadric hypersurface 
$Q_3$ by identifying $x\in\bbC^3$ 
with the point 
$$
[1,\,x,\, x{\cdot}x] = [1,\,x_1,\,x_2,\,x_3,\,{x_1}^2{+}{x_2}^2{+}{x_3}^2]
\in\bbP^4 = \bbP(\bbC^5).
$$
The resulting image is an affine chart on the projective quadric
$\bbQ^3\subset\bbP^4$ defined by the homogeneous equations
\be\label{eq: nullquadform}
X_0X_4 - {X_1}^2 - {X_2}^2 - {X_3}^2  = 0.
\ee 

A meromorphic null curve~$\gamma:S\to\bbC^3$
completes uniquely to an algebraic curve $g:S\to\bbQ^3\subset\bbP^4$
that is also null, in the sense that the tangent lines to the curve
lie in~$\bbQ^3$ as well.

Moreover, \eqref{eq: nullquadform} is the
quadratic form associated to an inner product $\la,\ra$ on~$\bbC^5$
with the property that a $g:S\to\bbQ^3$ that is the completion of
a meromorphic null curve~$\gamma:S\to\bbC^3$ is of the form $g = [G]$
where $G:S\to\bbC^5$ is meromorphic and satisfies 
\be\label{eq: Gnullcondition}
\la G, G\ra = \la\mathrm G,\mathrm{d}G\ra 
= \la\mathrm{d}G,\mathrm{d}G\ra = 0.
\ee
(In the last equation, $\la\mathrm{d}G,\mathrm{d}G\ra$ is
to be interpreted as a \emph{symmetric} meromorphic quadratic form.)

\begin{proposition}
If $\gamma:S\to\bbC^3$ is a meromorphic null curve 
with $d$ simple poles and no other poles, then the completed null
curve~$g:S\to\bbQ^3$ has degree $d$ $($as a map to $\bbP^4$$)$.  
If, in addition, $\gamma$ is an immersion away from its poles 
$($i.e., $\gamma$ is unbranched$)$, 
then $g:S\to\bbQ^3$ is also an immersion.
\end{proposition}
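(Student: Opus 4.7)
The plan is to compute $\deg g$ by pulling back the hyperplane at infinity, and then to verify the immersion property pointwise using the standard criterion that $g=[G]$ is an immersion at $p$ iff $G(p)$ and $G'(p)$ are linearly independent for any local holomorphic, non-vanishing lift $G$ of $g$. Everything reduces to a careful Laurent analysis at each pole.

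First I would study a pole $p$ of $\gamma$. Pick a $p$-centered local coordinate $z$ and expand $\gamma = v/z + w + zu + \cdots$ with $v\ne 0$. The null condition $\mathrm{d}\gamma\cdot\mathrm{d}\gamma = 0$ has leading Laurent term $(v\cdot v)\,z^{-4}\,\mathrm{d}z^2$, so $v\cdot v = 0$. This kills the $z^{-2}$ term in $\gamma\cdot\gamma = 2(v\cdot w)/z + (2v\cdot u + w\cdot w) + \cdots$, so the canonical lift $G := (1,\gamma_1,\gamma_2,\gamma_3,\gamma\cdot\gamma)\colon S\to\bbC^5$ has a pole of order exactly $1$ at $p$. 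Consequently $\tilde G(z) := z\,G(z)$ is holomorphic near $p$ with $\tilde G(p) = (0,v_1,v_2,v_3,2v\cdot w)\ne 0$, giving a local holomorphic, non-vanishing trivialization of $g^*\mathcal{O}_{\bbP^4}(-1)$ there; away from the poles, $G$ itself already serves as such a trivialization.

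For the degree, I would use the hyperplane $H = \{X_0 = 0\}$. On the affine chart $\bbC^3\subset\bbQ^3$ one has $X_0\equiv 1$, so the induced section of $g^*\mathcal{O}_{\bbP^4}(1)$ has no zeros away from the poles of $\gamma$. Near a pole $p$, the $\tilde G$-trivialization converts the pullback of $X_0$ into the function $z$, which vanishes to order exactly $1$ at $p$. Therefore $g^{-1}(H) = \sum_{i=1}^{d} p_i$ as an effective divisor of degree $d$, whence $\deg g = d$. For the immersion claim, I would apply the independence criterion pointwise. Off the poles, $G$ has first coordinate $1$ and $G'$ has first coordinate $0$, so dependence would force $G' = 0$, i.e., $\gamma' = 0$, contradicting the unbranched hypothesis. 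At a pole, $\tilde G(0) = (0,v_1,v_2,v_3,2v\cdot w)$ has first coordinate $0$ while $\mathrm{d}\tilde G/\mathrm{d}z(0) = (1,w_1,w_2,w_3,\ast)$ has first coordinate $1$, so independence is automatic as long as $\tilde G(0)\ne 0$, which holds because $v\ne 0$; thus $g$ is automatically an immersion at each pole.

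The main subtlety, and essentially the only one, is the first step: without the null condition, a simple pole of $\gamma$ would generically produce a double pole of $\gamma\cdot\gamma$ and hence a double pole of $G$, so the $\tilde G$-trivialization would need the factor $z^2$ rather than $z$, and the $X_0$-pullback would vanish to order $2$ at each pole. Once the null condition is used to kill the $z^{-2}$ coefficient of $\gamma\cdot\gamma$, both conclusions of the proposition fall out by direct inspection.
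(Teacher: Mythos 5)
Your proof is correct and is exactly the "local computation" that the paper's one-line proof alludes to: the key observation that the null condition forces $v\cdot v=0$ for the polar part $v/z$, so that $\gamma\cdot\gamma$ has only a simple pole and the lift $zG$ is holomorphic and non-vanishing, gives both the degree count via the hyperplane $\{X_0=0\}$ and the automatic immersivity at the poles. Nothing further is needed.
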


\begin{proof}
This follows immediately from local computation.
\end{proof}

\subsection{The Klein correspondence}
I now recall the famous \emph{Klein correspondence}
between nondegenerate contact curves $f:S\to\bbP^3$ 
and nondegenerate null curves $g:S\to\bbQ^3\subset\bbP^4$.

As before, let $V$ be a symplectic complex vector space of dimension~$4$
with symplectic form~$\beta\in\Lambda^2(V^*)$, 
with $\Omega = \tfrac12\beta^2\in\Lambda^4(V^*)$ a volume form on~$V$.
Let $W\subset \Lambda^2(V)$ be the $5$-dimensional subspace 
annihilated by~$\beta$.  
Then there is a nondegenerate symmetric inner product $\la,\ra$ on~$W$
defined by
$$
\la w_1, w_2\ra = \Omega(w_1\wedge w_2).
$$
The (connected) symplectic group $\Sp(\beta)\subset\GL(V)$ 
acts on $\Lambda^2(V)$ preserving~$W$
and preserving this inner product.   Morover $g(w) = w$
for all $w\in W$ if and only if $g = \pm I_V$, thus defining
a double cover $\Sp(\beta)\to\SO\bigl(\la,\ra\bigr)$, which
is one of the so-called `exceptional isomorphisms'.

Note that $\la w, w\ra = 0$ for a nonzero $w\in W$ 
if and only if $w$ is a \emph{decomposable} $2$-vector, 
i.e., $w = v_1\w v_2$ for two linearly independent vectors $v_1,v_2\in V$.
Such an element $w$ will be said to be a \emph{null} vector in $W$.
Define
$$
\bbQ^3=\bigl\{\ [w]\in \bbP(W)\ \vrule\ \la w, w\ra = 0\ \bigr\}\subset\bbP(W).
$$
Then $\bbQ^3$ is the \emph{null hyperquadric} of $\la,\ra$.
Since $\la,\ra$ is nondegenerate, $\bbQ^3$ 
is a smooth hypersurface in~$\bbP(W)\simeq\bbP^4$.

If $f:S\to\bbP(V)$ is a nondegenerate contact curve, 
then $g = f_2$ has image in $W$ and, moreover, since, by construction, 
$g(p)$ is the projectivization of a decomposable $2$-vector
for all $p\in S$, it follows that $g(S)\subset\bbQ^3$.

In fact, more is true:  Writing $f = [F]$ where $F:S\to V$
is meromorphic and letting $z:U\to\bbC$ be a local holomorphic
coordinate on~$U\subset S$ and writing $\mathrm{d}H = H'\,\mathrm{d}z$
for any meromorphic $H$ on~$S$, one obtains $g = [G]$,
where $G = F\w F'$.  Since $G' = F\w F''$, it follows that
$$
\begin{aligned}
\la G,G\ra&=\la F\w F',F\w F'\ra=\Omega(F\w F'\w F\w F')=\Omega(0)=0\\
\la G,G'\ra&=\la F\w F',F\w F''\ra=\Omega(F\w F'\w F\w F'')=\Omega(0)=0\\
\la G',G'\ra&=\la F\w F'',F\w F''\ra=\Omega(F\w F''\w F\w F'')=\Omega(0)=0\\
\end{aligned}
$$
Hence, $g:S\to\bbQ^3$ is a null curve, which, 
by Proposition~\ref{prop: f_to_f2}, 
is nondegenerate as a curve in~$\bbP(W)$.
 
The interesting thing is the converse, which is due to Klein: 

\begin{proposition} 
If $g:S\to\bbQ^3$ 
is a null curve that is nonlinear, i.e., its image is not
contained in a linear $\bbP^1\subset\bbP(W)$, then $g=f_2$
for a unique nondegenerate contact curve~$f:S\to\bbP(V)$.
\end{proposition}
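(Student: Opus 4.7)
The strategy is to recover $f$ as the \emph{characteristic line} of the family of Lagrangian $2$-planes in $V$ determined by $g$. Since a nonzero vector $w\in W$ is null if and only if it is decomposable, each point $g(p)\in\bbQ^3$ corresponds to a $2$-plane $\alpha(p)\subset V$, and the condition $g(p)\in\bbP(W)$ forces $\alpha(p)$ to be $\beta$-isotropic, i.e., Lagrangian. Thus $g$ induces a holomorphic map $p\mapsto\alpha(p)$ into the Lagrangian Grassmannian of~$V$. Locally, I would lift $g$ to $G=F_1\w F_2$ with $F_1,F_2\colon U\to V$ a holomorphic frame of~$\alpha$.

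Next, I would unwind the null condition. A direct computation gives
$$
G'\w G' \;=\; -2\,F_1\w F_2\w F_1'\w F_2'\,,
$$
so the null condition $\la G',G'\ra=0$ (equivalently $G'\w G'=0$) forces $F_1,F_2,F_1',F_2'$ to be linearly dependent at every point. Equivalently, the derivative map $\mathrm{d}\alpha\colon TS\to\Hom(\alpha,V/\alpha)$ has rank at most one everywhere. Since $g$ is nonconstant (nonlinearity implies this), its rank is generically one, and its kernel is a well-defined line $\ell(p)\subset\alpha(p)$.

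I then define $f(p)=\ell(p)$. Away from the discrete vanishing locus of $\mathrm{d}\alpha$ this is evidently holomorphic, and because $\bbP(V)$ is compact, $f$ extends holomorphically across the remaining points by Riemann removability. To verify $f_2=g$, perform a local $\GL_2$-valued gauge change on $(F_1,F_2)$ so that $F_1$ spans $\ell$; then $F_1'\in\alpha$, and writing $F_1'=aF_1+bF_2$ gives $F_1\w F_1'=b\,G$, whence $f_2=[F_1\w F_1']=[G]=g$ wherever $b\ne0$. Since $f_2(S)\subset\bbP(W)$, the equivalence recalled in the definition of contact curve shows that $f$ is a contact curve.

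Finally, I would check nondegeneracy and uniqueness. If $f$ had image in $\bbP(V'')$ for some $3$-dimensional $V''\subset V$, then $\beta|_{V''}$ would have rank $2$ (rank $3$ is impossible by parity, and rank $0$ is excluded by the nondegeneracy of $\beta$), so $\bbP(\L^2 V'')\cap\bbP(W)$ would be a $\bbP^1$ containing $g=f_2$, contradicting nonlinearity. For uniqueness, any nondegenerate contact curve $\tilde f$ with $\tilde f_2=g$ must satisfy $E_2(\tilde f)(p)=\alpha(p)$, and the characteristic description then forces $\tilde f(p)=E_1(\tilde f)(p)=\ell(p)$. I expect the main obstacle to be the third step, namely producing a canonical holomorphic extension of $\ell$ across the points where $\mathrm{d}\alpha$ vanishes: this requires a careful local analysis of the vanishing orders of $F_1',F_2'$ modulo $\alpha$ to ensure that the pointwise construction glues into a globally defined holomorphic map $f\colon S\to\bbP(V)$.
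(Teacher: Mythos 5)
Your construction of $f$ as the kernel line $\ell(p)=\ker\bigl(\mathrm{d}\alpha_p\colon\alpha(p)\to V/\alpha(p)\bigr)$ is, once unwound, the same curve the paper produces: there one writes $G=F\w H$ and $G'=F\w K$, deduces $F\w F'\w H=0$, and so identifies $[F]$ as exactly the line of $\alpha(p)$ whose derivative stays inside $\alpha(p)$. So your route is essentially the paper's, recast in the language of the lift to the Lagrangian Grassmannian; the identity $G'\w G'=-2\,F_1\w F_2\w F_1'\w F_2'$ is correct and is the right translation of the null condition into the rank-one condition, your nondegeneracy argument is sound (the paper instead quotes Proposition~\ref{prop: f_to_f2}), and the uniqueness argument is fine. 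The extension of $\ell$ across the zeros of $\mathrm{d}\alpha$, which you single out as the main worry, is indeed routine: the kernel line is given locally by holomorphic minors, and one extends by dividing out the common vanishing factor.

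The genuine gap is elsewhere, in the step ``$f_2=[F_1\w F_1']=[G]=g$ wherever $b\ne 0$'': you never show that $b\not\equiv 0$. The rank-one condition only gives $F_1'\in\alpha$, which is perfectly consistent with $F_1'=aF_1$ identically, i.e., with the characteristic line $\ell(p)$ being \emph{constant}. In that case $f$ is a constant map, $F_1\w F_1'\equiv 0$, and your identity is vacuous, so the proposition is not proved. This is precisely the case the paper spends a full paragraph excluding (``Suppose, on the contrary, that $F\w F'$ did vanish identically\dots''), using an argument about totally null $3$-planes in $W$. In your setup the quickest repair is: if $\ell\equiv[v^0]$, then every $\alpha(p)$ is a Lagrangian plane containing $v^0$, hence is contained in the $3$-plane $\{u\in V:\beta(v^0,u)=0\}$; therefore $G$ takes values in the $2$-dimensional subspace $v^0\w\{u\in V:\beta(v^0,u)=0\}\subset W$, so $g(S)$ lies in a line in $\bbP(W)$, contradicting nonlinearity. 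With that paragraph added, your proof is complete.
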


\begin{proof}
The result is local,  so write $g = [G]$
where $G:U\to W$ is holomorphic and nonvanishing.  
By hypothesis, $G\w G = 0$, which, of course, 
implies that $G\w G' = 0$.  The condition that $g:S\to \bbQ^3$
be null is then equivalent to $G'\w G' = 0$.  Thus, $G$ and $G'$
span a null $2$-plane in $W$.  Since $G$ and $G'$ are each
decomposable, while $G\w G' = 0$, it follows that they can be
written in the form $G = F \w H$ and $G' = F \w K$ 
for some meromorphic $F,H,K:U\to W$ that are virtually linearly independent,
i.e., $F\w H\w K$ vanishes only at isolated points.  
Since $G' = F'\w H + F\w H' = F \w K$, 
it follows that $F'\w H = F\w (K-H')$, so that $F\w F' \w H$ 
vanishes identically, implying that $F$, $F'$, and $H$ 
are linearly dependent and hence span a $2$-dimensional 
vector space.  

Now, I claim that $F\w F'$ does not vanish identically, 
which implies that $[F\w F'] = [F\w H] = g$, and hence that $g = f_2$
where $f = [F]$.  

Suppose, on the contrary, that $F\w F'$ did vanish identically.
In that case $F = h v^0$ for some vector~$v^0\in V$, 
unique up to multiples, and some meromorphic function~$h$.
Thus, we can assume that $F = v^0$, 
and so $G = v^0\w H$. Consequently, $G' = v^0\w H'$
and $G'' = v^0\w H''$.  If $v^0$, $H$, $H'$, and $H''$ 
were linearly independent on any open set, 
it would follow that the subspace of $W$ 
spanned by $G$, $G'$, and $G''$, would be of dimension $3$
and totally null (since all of the elements are decomposable), 
which is impossible since $\la,\ra$
is nondegenerate and $W$ has dimension~$5$. 
Thus, $H''$ is a linear combination
of $v^0$, $H$ and $H'$.  Consequently, the $3$-plane
spanned by $v^0$, $H$, and $H'$ is constant, implying
that the $2$-plane in $W$ spanned by $G$ and $G'$ is
constant, which implies that $g(S)\subset\bbQ^3$
is a line in $\bbP^4$, contrary to hypothesis.

It is now established that $g = f_2$ 
where $f = [F]:S\to\bbP(V)$ is a contact curve,
and the uniqueness of $f$ is clear.
Since $g$ is not constant, 
$f(S)$ does not lie in a line in $\bbP(V)$ 
and hence, by Proposition~\ref{prop: f_to_f2},
$g:S\to\bbQ^3$ is nondegenerate in~$\bbP(W)$.
\end{proof}

\subsection{Ramifications and degrees}
Now suppose that $S$ is a compact (connected) Riemann
surface and that $f:S\to\bbP(V)$ is a holomorphic contact
curve that is not contained in a line and that 
$g=f_2:S\to\bbQ^3\subset\bbP(W)$
is its Klein-corresponding null curve.  
The Pl\"ucker formula~\eqref{eq: deg-genus-ram}, 
coupled with the fact that $r_3(f) = r_1(f)$, 
implies that
\be\label{eq: contactPlucker1}
4\deg(f) + 12(k-1) = 4r_1(f) + 2r_2(f),
\ee
where $k$ is the genus of~$S$.
Note that, in consequence, $r_2(f)$ is always even.
Meanwhile, Proposition~\ref{prop: f_to_f2} 
and \eqref{eq: deg-genus-ram} imply
\be\label{eq: contactPlucker2}
5\deg(g) + 20(k-1) = 5r_1(f) + 5r_2(f).
\ee

\begin{example}\label{ex: unbranchedrationalnullcurves}
The case of most interest in these notes will be
when $g$ is unbranched, i.e., $r_2(f)=0$,
and $S$ has genus $k=0$, in which case, 
the formulae above reduce to 
\be\label{eq: unbranchedrationalnullcurves}
\deg(g) = \deg(f) +1
\qquad\text{and}\qquad
r_1(f) = \deg(f)-3.
\ee
These relations will be useful in the sequel.
\end{example}

\section{Rational null curves of low degrees}

With the above preliminaries out of the way, 
I can now provide an analysis of the possibilities when 
$f:\bbP^1\to\bbP^3$ is a rational contact curve of low degree
such that $f_2:\bbP^1\to\bbQ^3$ is unbranched.

A contact curve $f:\bbP^1\to\bbP(V)$
of degree $1$ is linear, and a null curve~$f:\bbP^1\to\bbQ^3$
is linear. These linear cases will be set aside from now on.

\begin{example}[Even degrees]
\label{ex: null-even-degree}
Explicit unbranched null curves~$g:\bbP^1\to\bbQ^3$
are provided by Example~\ref{ex: ratl-contact-of-all-degrees}.
The curve $g=f_2$ has even degree~$2p{+}2\ge4$
and is unbranched whenever~$q = p{+}1$.
\end{example}

However describing all the unbranched null curves in~$\bbQ^3$ 
of any given degree seems to be a harder problem.

\subsection{Degree at most $4$}
The very lowest possible degrees are easy to treat.

\begin{proposition}
If $f:\bbP^1\to\bbP(V)$ is a nonlinear contact curve 
of degree at most~$3$, then $f(\bbP^1)\subset\bbP(V)$ 
is a rational normal curve. 
All contact rational normal curves are symplectically equivalent.
\end{proposition}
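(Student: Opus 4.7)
The plan has three parts: first pin down the degree, then identify the projective model of $f(\bbP^1)$, and finally reduce the symplectic classification to a standard transitivity statement.

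First I would apply Proposition~\ref{prop: f_to_f2} to conclude that any nonlinear contact curve is automatically nondegenerate. Since a nondegenerate holomorphic map $\bbP^1\to\bbP^3$ necessarily has degree at least $3$, the hypothesis $\deg f\le 3$ forces $\deg f = 3$. Next I would feed $n=3$, $k=0$, $\deg f=3$ into the Pl\"ucker relation~\eqref{eq: deg-genus-ram}: the left side becomes $12+12(-1)=0$, so $3r_1+2r_2+r_3=0$, and therefore every $r_i$ vanishes. By Example~\ref{ex: unramifiedcurves}, $f(\bbP^1)$ must be a rational normal curve in $\bbP^3$, i.e., projectively equivalent to the image of $z\mapsto[1,z,z^2,z^3]$.

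For the symplectic uniqueness claim, I would invoke the earlier proposition stated just after Proposition~\ref{prop: f_to_f2}, which asserts that if $f$ is nondegenerate and $f_2:S\to\bbP(\Lambda^2V)\simeq\bbP^5$ is degenerate, then the symplectic form $\beta$ making $f$ contact is unique up to scalar. The contact assumption already places $f_2(\bbP^1)$ in the hyperplane $\bbP(W)\subset\bbP^5$, so $f_2$ is indeed degenerate and the hypothesis is met.

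Finally, given two contact rational normal curves $(f,\beta)$ and $(f',\beta')$, choose a $\GL(V)$-lift $\tilde g$ of a projective equivalence carrying $f(\bbP^1)$ to $f'(\bbP^1)$. Then $\tilde g^*\beta'$ also makes $f$ contact, so $\tilde g^*\beta'=c\,\beta$ for some $c\in\bbC^*$ by the uniqueness step. Rescaling $\tilde g$ by $c^{-1/2}$ converts it into a symplectic isomorphism $(V,\beta)\to(V,\beta')$ carrying $f(\bbP^1)$ to $f'(\bbP^1)$. The only substantive input is the uniqueness-up-to-scalar step; the other parts are immediate consequences of machinery already set up in the notes, and the remaining scalar ambiguity is killed by the rescaling trick.
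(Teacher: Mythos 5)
Your proposal is correct and follows essentially the same route as the paper: nondegeneracy via Proposition~\ref{prop: f_to_f2}, the Pl\"ucker relation forcing degree $3$ and total unramifiedness, and then Example~\ref{ex: unramifiedcurves}. The only cosmetic difference is in the symplectic-equivalence step, where the paper exhibits the (essentially unique) form $\beta=\xi_0\w\xi_3-3\xi_1\w\xi_2$ by computing $f_2$ for the standard cubic $[1,z,z^2,z^3]$, while you instead invoke the abstract uniqueness-up-to-scalar proposition together with the $c^{-1/2}$ rescaling trick --- same substance, correctly executed.
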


\begin{proof}
By~\eqref{eq: contactPlucker1}, if $f:\bbP^1\to\bbP^3$
is a nondegenerate contact curve, then
\be\label{eq: ratlcontactPlucker}
\deg(f) = 3 + r_1(f) + \tfrac12 r_2(f).
\ee
Consequently, $\deg(f)\ge 3$,
with equality only when $r_1(f)=r_2(f)=0$. 
Since $r_3(f)=r_1(f)=0$, it follows that $f:\bbP^1\to\bbP^3$
is completely unramified and hence is a rational
normal curve of degree $3$ (see Example~\ref{ex: unramifiedcurves}).

Conversely, if $\deg(f)=3$, then, 
choosing a meromorphic function~$z$ on~$\bbP^1$ 
with exactly one simple pole, write
$$
f = [v^0 + z\,v^1 + z^2\, v^2 + z^3\,v^3]
$$ 
for some basis $(v^0,v^1,v^2,v^3)$ of $V\simeq\bbC^4$
with dual basis $(\xi_0,\xi_1,\xi_2,\xi_3)$ of $V^*$.
Then
$$
g = f_2 = \bigl[\,v^0{\w}v^1 + 2z\,v^0{\w}v^2 
                 + z^2\,(3v^0{\w}v^3+v^1{\w}v^2)
                 + 2z^3\,v^1{\w}v^3 + z^4\,v^2{\w}v^3\,].
$$
Thus, $g(\bbP^1)$ lies linearly fully 
in the projectivization of the kernel~$W\subset\Lambda^2(V)$
of the nondegenerate $2$-form 
$\beta = \xi_0\w\xi_3-3\xi_1\w\xi_2\in\Lambda^2(V^*)$. 

Thus, all contact rational normal curves 
are symplectically equivalent.
\end{proof}

\begin{corollary}
If $g:\bbP^1\to\bbQ^3$ is a nonlinear null curve 
of degree at most~$4$, then $\deg(g) = 4$ and $g = f_2$ 
where~$f:\bbP^1\to\bbP^3$ is a contact rational normal curve.
In particular, there are no nonlinear null curves in~$\bbQ^3$
of degree $2$ or $3$.
\end{corollary}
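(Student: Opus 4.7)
The plan is to reduce immediately to the previous proposition via Klein's correspondence, and then to extract the degree bound from the rational-case Plücker formulas.

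First, since $g:\bbP^1\to\bbQ^3$ is nonlinear by hypothesis, the preceding proposition produces a unique nondegenerate contact curve $f:\bbP^1\to\bbP(V)$ with $g=f_2$. Being nondegenerate in $\bbP^3$, $f$ is automatically nonlinear, so the hypotheses of the previous proposition about contact curves of low degree apply to $f$. The task then reduces to bounding $\deg(g)$ from below in terms of $\deg(f)$ and the ramification invariants, and pinning down the extremal case.

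Next, I would specialize the two Plücker identities \eqref{eq: contactPlucker1} and \eqref{eq: contactPlucker2} to genus $k=0$. The contact version \eqref{eq: contactPlucker1} gives the formula \eqref{eq: ratlcontactPlucker}, namely
\[
\deg(f) = 3 + r_1(f) + \tfrac12 r_2(f),
\]
while \eqref{eq: contactPlucker2} with $k=0$ rearranges to
\[
\deg(g) = 4 + r_1(f) + r_2(f).
\]
Since $r_1(f),\,r_2(f)\ge 0$, this immediately yields $\deg(g)\ge 4$, ruling out nonlinear null curves of degree $2$ or $3$ in $\bbQ^3$ and forcing $\deg(g)=4$ under the stated hypothesis. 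Equality moreover forces $r_1(f)=r_2(f)=0$, whence $\deg(f)=3$ and $f$ is a nondegenerate contact curve of degree $3$.

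Finally, invoking the preceding proposition, any such $f$ has $f(\bbP^1)$ equal to a rational normal cubic in $\bbP^3$, and all contact rational normal curves are symplectically equivalent; thus $g=f_2$ is the image of a contact rational normal curve, as claimed. There is no real obstacle here; the only thing to be careful about is ensuring the Klein correspondence applies, which is guaranteed by the explicit nonlinearity assumption on $g$, and that $f$ being nondegenerate justifies applying the degree-$3$ classification.
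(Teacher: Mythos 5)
Your argument is correct and is essentially the paper's own proof: both reduce via the Klein correspondence to a contact curve $f$ with $g=f_2$, apply the genus-zero Pl\"ucker relation $\deg(g)=4+r_1(f)+r_2(f)$ to force $\deg(g)=4$ and $r_1(f)=r_2(f)=0$, and then invoke the degree-$3$ classification to identify $f$ as a contact rational normal curve. No issues.
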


\begin{proof}
Write $g = f_2$, where $f:\bbP^1\to\bbP^3$ is a contact curve.  
Then $\deg(g) = 4 + r_1(f) + r_2(f)$,
so $\deg(g)\le 4$ implies that $\deg(g) = 4$ and $r_1(f)=r_2(f)=0$.
Thus, $f:\bbP^1\to\bbP^3$ is a rational normal curve.
\end{proof}

\subsection{Degree $5$}
To begin, I classify the nonlinear rational contact curves of degree~$4$.

\begin{proposition}\label{prop: f_of_deg_4}
Up to symplectic equivalence, there is only one
nonlinear contact curve $f:\bbP^1\to\bbP^3$ of degree~$4$.
It satisfies $R_1(f) = 0$ and $R_2(f) = p+q$ where $p,q\in\bbP^1$
are distinct.
\end{proposition}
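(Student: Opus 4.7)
The plan is to combine the Pl\"ucker formula for contact curves with a simple degree-count argument. For a rational contact curve, equation~\eqref{eq: contactPlucker1} specializes to $r_1(f)+\tfrac12 r_2(f) = \deg f - 3$, so $\deg f = 4$ forces $r_1(f)+\tfrac12 r_2(f) = 1$. Since $r_2$ is always even, only $(r_1,r_2) = (1,0)$ and $(0,2)$ can occur.

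The key observation is that $a_3(p) \le \deg f = 4$ at every point $p\in\bbP^1$: since $\deg f = 4$, the coefficients of $f$ in any basis of $V$ are sections of $\mathcal O(4)$ on $\bbP^1$, so their vanishing orders at a single point are at most~$4$. Now the proof of Proposition~\ref{prop: f_to_f2} shows that $a_3(p) = a_1(p)+a_2(p)$ at every point. Hence $(r_1,r_2)=(1,0)$ would force $a_3 = 2+3 = 5$ at the branch point, and $(r_1,r_2)=(0,2)$ with $R_2(f) = 2p$ would force $a_3 = 1+4 = 5$. Both exceed $4$, so the only remaining possibility is $(r_1,r_2) = (0,2)$ with $R_2(f) = p+q$ for two distinct points.

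After sending $p,q$ to $0,\infty$ by a M\"obius transformation, the valuations at each of these points are $(0,1,3,4)$. The main step, and the main obstacle, is to show that the flags $E_\bullet(0)$ and $E_\bullet(\infty)$ in $V$ are opposite. I would handle this by further degree bookkeeping: if for instance $f(0)=f(\infty)$, then in a basis with this common direction as $v^0$ one would have $\sum_{i=1}^3 \bigl(\nu_0(h_i)+\nu_\infty(h_i)\bigr) = 2(1+3+4) = 16$, while $\sum_{i=1}^3 \deg h_i \le 12$, a contradiction. Analogous annihilator/degree counts using $E_2(\infty)$ and $E_3(\infty)$ dispose of the remaining non-transverse alignments of the two flags.

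With transversality in hand I choose a basis $(v^0,v^1,v^2,v^3)$ simultaneously adapted to both flags, so that $E_i(0) = \langle v^0,\ldots,v^{i-1}\rangle$ and $E_i(\infty) = \langle v^{4-i},\ldots,v^3\rangle$. Writing $f = [h_0:h_1:h_2:h_3]$, the prescribed valuations give $\nu_0(h_i) = (0,1,3,4)$ for $i=0,1,2,3$, while the valuations at $\infty$, read in the reversed basis order, give $\deg h_i = 4-\nu_\infty(h_i) = (0,1,3,4)$. Hence $\nu_0(h_i) = \deg h_i$, and each $h_i$ is a monomial $c_i\,z^{\deg h_i}$ with $c_i \ne 0$. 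Rescaling the $v^i$ and $z$ normalizes $f$ to $[1:z:z^3:z^4]$, which is Example~\ref{ex: ratl-contact-of-all-degrees} with $(p,q)=(1,3)$; for this $f$ the symplectic form $\beta$ is determined up to a nonzero scalar by the linear condition $\beta(F\w F') = 0$ and is nondegenerate. This yields existence, uniqueness up to symplectic equivalence, and the stated ramification $R_1(f) = 0$, $R_2(f) = p+q$.
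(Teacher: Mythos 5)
Your argument reaches the right conclusion by a genuinely different route from the paper's. The paper treats each of the cases $(r_1,r_2)=(1,0)$, $(0,2)$ with $R_2(f)=2p$, and $(0,2)$ with $R_2(f)=p+q$ by writing $f=[v^0+z\,v^1+\cdots+z^4v^4]$ and explicitly computing $f_2$, checking in the first two cases that its coefficients span all of $\Lambda^2(\bbC^4)$ (so $f$ cannot be contact for any $\beta$) and in the third that they span only a $5$-dimensional subspace annihilated by a nondegenerate $2$-form. Your elimination of the first two cases via the bound $a_3(p)\le\deg f$ combined with the pointwise identity $a_3(p)=a_1(p)+a_2(p)$ (which is indeed established for every $p$ in the proof of Proposition~\ref{prop: f_to_f2}) is cleaner and avoids computation entirely, and your endgame --- transversality of the osculating flags at the two ramification points forcing each coordinate to be a monomial, landing on Example~\ref{ex: ratl-contact-of-all-degrees} with $(p,q)=(1,3)$ --- is a conceptual substitute for the paper's coefficient-by-coefficient analysis. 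What the paper's computation buys is independence from any flag-transversality lemma; what yours buys is brevity and a method that scales to other low-degree classifications.

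One step as written is incorrect, though it is fixable with the tool you already invoke for the other sub-cases. If $f(0)=f(\infty)$ and you merely choose a basis whose first vector spans the common line, you cannot conclude $\sum_{i=1}^3\nu_0(h_i)=1+3+4$: that equality requires the basis to be adapted to the flag at $0$, and a single basis cannot in general be adapted at both points simultaneously --- that is exactly the transversality being proved. For an arbitrary basis with only $v^0$ pinned you get just $\nu_0(h_i)\ge 1$, nowhere near the claimed total of $16>12$. The correct ``annihilator count'' is: to rule out $E_i(0)\cap E_{4-i}(\infty)\ne 0$, choose a hyperplane $H=\{\xi=0\}$ containing $E_i(0)+E_{4-i}(\infty)$ (possible, since this span then has dimension at most $3$); because $\xi$ annihilates $E_i(0)$ one has $\nu_0(\xi\circ F)\ge a_i(0)$, and likewise $\nu_\infty(\xi\circ F)\ge a_{4-i}(\infty)$, so with valuations $(0,1,3,4)$ at both points the total intersection multiplicity is at least $5$ for $i=1,3$ and at least $6$ for $i=2$, exceeding $\deg f=4$ in every case. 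With that repair the proof is complete.
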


\begin{proof}
Let $f:\bbP^1\to\bbP^3$ be a nonlinear contact curve of degree~$4$. 
Then by~\eqref{eq: ratlcontactPlucker},
 $\bigl(r_1(f),r_2(f)\bigr)$ is either $(1,0)$
or $(0,2)$, and $f$ can be written in the form
$$
f = \bigl[\,v^0 + z\,v^1 + z^2\, v^2 + z^3\,v^3 + z^4\,v^4\,\bigr]
$$
for five vectors $v^0,\ldots,v^4$ in $\bbC^4$ 
that satisfy one linear relation 
and where $z$ is a meromorphic function on~$\bbP^1$ 
that has a single pole.  
Since $f$ has degree~$4$, neither $v^0$ nor $v^4$ can be zero.

If $r_1(f)=1$ and $r_2(f) = 0$, 
then $f$ branches to order $1$ at a single point of~$\bbP^1$,
which can be taken to be the pole of $z$. 
This implies that $v^3$ must be a multiple of~$v^4$. 
Hence, by replacing~$z$ by~$z+c$ for an appropriate
constant~$c$, it can be assumed that~$v^3 = 0$. 
Thus, the vectors $v^0,v^1,v^2,v^4$ are a basis of~$\bbC^4$.  
However, when
$$
f = \bigl[\,v^0 + z\,v^1 + z^2\, v^2 + z^4\,v^4\,\bigr],
$$ 
one finds that 
$$
f_2 = \bigl[\,v^0{\w}v^1+2z\,v^0{\w}v^2 + z^2\,v^1{\w}v^2 + 4z^3\,v^0{\w}v^4
              + 3z^4\,v^1{\w}v^4 + 2z^5\,v^2{\w}v^4\,\bigr],
$$
so that $f_2(\bbP^1)$ is nondegenerate 
in~$\bbP\bigl(\Lambda^2(\bbC^4)\bigr)\simeq\bbP^5$.
Thus, such an $f$ cannot be a contact curve 
with respect to any symplectic structure on $\bbC^4$.

If $r_2(f)=2$, there are two possibilities, either $R_2(f) = 2{\cdot}p$
for some $p\in\bbP^1$ or else $R_2(f) = p+q$ where $p,q\in\bbP^1$
are distinct.  Moreover, $r_1(f)=0$, which implies, 
in particular, that $v^0\w v^1$ and $v^3 \w v^4$ are nonzero.

In the first subcase, assume, without loss of generality, 
that $p$ is the zero of~$z$.  Since 
$$
f_2 = \bigl[ v^0{\w}v^1+2z\,v^0{\w}v^2 + z^2\,(3\,v^0{\w}v^3+v^1{\w}v^2) 
+ \cdots \bigr],
$$
where the unwritten terms vanish to order $3$ or more at $z=0$, 
the assumption that $R_1(f_2) = R_2(f) = 2{\cdot}p$ 
implies that $v^0{\w}v^2$ and $3\,v^0{\w}v^3+v^1{\w}v^2$ 
are multiples of $v^0{\w}v^1$.
This implies that both $v^2$ and $v^3$ lie in the linear span of $v^0$
and $v^1$, which is impossible, since $v^0,v^1, v^4$ cannot span $\bbC^4$. 

Meanwhile, if $R_2(f) = p+q$, where $p,q\in\bbP^1$ are distinct,
then we can choose $z$ so that $p$ and $q$ are defined by $z=0$
and $z = \infty$.  Since
$$
f_2 = \bigl[ v^0{\w}v^1+2z\,v^0{\w}v^2 + \cdots 
            + 2z^5\,v^2{\w}v^4 + z^6\,v^3{\w}v^4\bigr],
$$
where the unwritten terms vanish to order at least $2$ at $z=0$
and have a pole of at most order $4$ at $z=\infty$, it follows
from $R_1(f_2) = R_2(f) = p+q$ that $v^0{\w}v^2$ is a multiple of $v^0{\w}v^1$
and $v^2{\w}v^4$ is a multiple of $v^3{\w}v^4$.  In particular,
$v^2$ must be both a linear combination of $v^0$ and $v^1$ 
and a linear combination of $v^3$ and $v^4$.  Now, this can
only happen if $v^2 = 0$, since $v^0,v^1,v^3,v^4$ 
must be a basis of $\bbC^4$.   Thus,
$$
f = \bigl[\,v^0 + z\,v^1 + z^3\, v^3 + z^4\,v^4\,\bigr],
$$
which implies
$$
f_2 = \bigl[\,v^0{\w}v^1+3z^2\,v^0{\w}v^3+z^3\,(2\,v^1{\w}v^3+4\,v^0{\w}v^4)
              + 3z^4\,v^1{\w}v^4 + z^6\,v^3{\w}v^4\,\bigr].
$$
Thus, $f_2$ is linearly full in $\bbP(W)\simeq\bbP^4$, 
where $W\subset\Lambda^2(\bbC^4)$ is the $5$-dimensional subspace 
annihilated by the symplectic form
$$
\beta = \xi_0\w\xi_4 - 2\,\xi_1\w\xi_3
$$ 
(where $(\xi_0,\xi_1,\xi_3,\xi_4)$ is the basis of $V^*\simeq\bbC^4$ 
that is dual to the basis $(v^0,v^1,v^3,v^4)$ of $\bbC^4$).

Thus, $f$ is a contact curve with respect to the contact structure 
on $\bbP^3$ defined by~$\beta$.  
The uniqueness of $f$ up to symplectic equivalence is now clear.
\end{proof}

\begin{corollary}
There is no nonlinear null curve~$g:\bbP^1\to\bbQ^3$ of degree~$5$.
\end{corollary}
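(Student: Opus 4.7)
The plan is to apply the Klein correspondence to reduce the nonexistence of a degree-$5$ null curve to a statement about rational contact curves of low degree, which can then be settled by Proposition~\ref{prop: f_of_deg_4} and the Pl\"ucker formulas~\eqref{eq: contactPlucker1}--\eqref{eq: contactPlucker2}.

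First, I would argue by contradiction. Suppose $g:\bbP^1\to\bbQ^3$ is a nonlinear null curve of degree~$5$. By the Klein correspondence (the proposition following the introduction of the Klein correspondence), $g = f_2$ for a unique nondegenerate contact curve $f:\bbP^1\to\bbP(V)$.

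Next, I would specialise~\eqref{eq: contactPlucker1} and~\eqref{eq: contactPlucker2} to $k=0$ to obtain
$$
2\deg(f) = 6 + 2\,r_1(f) + r_2(f),
\qquad
\deg(g) = 4 + r_1(f) + r_2(f).
$$
The first equation forces $r_2(f)$ to be even, and substituting $\deg(g)=5$ into the second equation gives $r_1(f) + r_2(f) = 1$. Combined with $r_2(f)\ge 0$ even, this forces $r_1(f)=1$ and $r_2(f)=0$, whence $\deg(f) = 4$.

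The final step is to invoke Proposition~\ref{prop: f_of_deg_4}, which asserts that, up to symplectic equivalence, there is a \emph{unique} nonlinear contact curve of degree~$4$, and that this curve satisfies $R_1(f)=0$, i.e., $r_1(f)=0$. This directly contradicts the conclusion $r_1(f)=1$ from the previous paragraph, so no such $g$ can exist. There is no serious obstacle here: the real work has already been done in the proof of Proposition~\ref{prop: f_of_deg_4}, where the possibility $(r_1(f),r_2(f))=(1,0)$ in degree~$4$ was ruled out by observing that the corresponding $f_2$ would be linearly full in $\bbP\bigl(\Lambda^2(\bbC^4)\bigr)\simeq\bbP^5$, so no symplectic form on~$\bbC^4$ makes $f$ a contact curve. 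In this sense, the non-existence of degree-$5$ null curves in $\bbQ^3$ is forced, via Klein, by the non-existence of a $(1,0)$-ramified contact curve of degree~$4$.
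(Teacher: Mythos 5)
Your argument is correct and is essentially the paper's own proof: reduce via the Klein correspondence to a contact curve $f$ with $\deg(g)=4+r_1(f)+r_2(f)$, use the evenness of $r_2(f)$ to force $(r_1(f),r_2(f))=(1,0)$ and $\deg(f)=4$, then contradict Proposition~\ref{prop: f_of_deg_4}. No differences worth noting.
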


\begin{proof}
If such a curve $g$ existed, it would be of the form $g = f_2$
where $f:\bbP^1\to\bbP^3$ would be a nonlinear contact curve
with ramification degrees $r_1(f)$ and $r_2(f)$.  Now
$$
5 = \deg(g) = 4 + r_1(f) + r_2(f),
$$
which, since $r_2(f)$ must be even, implies that $r_1(f) = 1$
and $r_2(f) = 0$.  Hence $\deg(f) = 3 + r_1(f) + \tfrac12r_2(f) = 4$.  
However, Proposition~\ref{prop: f_of_deg_4} 
shows that the only nonlinear contact curve~$f:\bbP^1\to\bbP^3$ 
of degree $4$ has $r_1(f) = 0$ and $r_2(f) = 2$.  

Thus, such a $g$ does not exist.
\end{proof}

\subsection{Degree $6$}
Now, I will classify the nonlinear rational null curves
of degree~$6$.

\begin{proposition}\label{prop: ratl-deg-6}
Up to projective equivalence, 
there are only two nonlinear null curves $g:\bbP^1\to\bbQ^3$
of degree~$6$.  One of these is unbranched,
and the other has two distinct branch points, each of order~$1$.
\end{proposition}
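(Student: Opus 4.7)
The plan is to route the argument through the Klein correspondence, writing every nonlinear null curve $g:\bbP^1\to\bbQ^3$ as $g = f_2$ for a unique nonlinear contact curve $f:\bbP^1\to\bbP^3$. From Proposition~\ref{prop: f_to_f2} together with the Pl\"ucker formulae \eqref{eq: contactPlucker1} and \eqref{eq: contactPlucker2}, specialized to $k=0$, one obtains
$$
\deg(g) = 4 + r_1(f) + r_2(f)
\qquad\text{and}\qquad
\deg(f) = 3 + r_1(f) + \tfrac12 r_2(f),
$$
with $r_2(f)$ necessarily even. Imposing $\deg(g) = 6$ leaves exactly two ramification profiles: $(r_1(f),r_2(f)) = (0,2)$ with $\deg(f) = 4$, and $(r_1(f),r_2(f)) = (2,0)$ with $\deg(f) = 5$.

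The first profile is immediate from Proposition~\ref{prop: f_of_deg_4}, which supplies a unique such $f$ up to symplectic equivalence. Its Klein image $g$ satisfies $R_1(g) = R_2(f) = p+q$ with $p \ne q$, producing the branched curve of the statement (two distinct simple branch points).

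For the second profile, $g = f_2$ is automatically unbranched because $R_1(g) = R_2(f) = 0$, so the task reduces to classifying degree-$5$ contact curves with $r_1(f) = 2$ and $r_2(f) = 0$. First, I would rule out a single branch point of order~$2$: at such a point one would have $r_3(f) = r_1(f) = 2$, forcing osculating orders $(a_0,a_1,a_2,a_3) = (0,3,4,7)$; but the osculating hyperplane at any point meets $f$ with multiplicity $a_3 \le \deg(f) = 5$, a contradiction. So the two units of ramification must live at two distinct points, and by a M\"obius automorphism of $\bbP^1$ I place them at $z = 0$ and $z = \infty$, where the osculating orders are $(0,2,3,5)$ at each. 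In a basis $(v_0,v_1,v_2,v_3)$ of $\bbC^4$ adapted to the branch point at $0$, write $f = [v_0 + h_1v_1 + h_2v_2 + h_3v_3]$ with $\bigl(\nu_0(h_1),\nu_0(h_2),\nu_0(h_3)\bigr) = (2,3,5)$. The orders of the four coordinate functions at $z = \infty$, computed in this same basis after clearing the common pole of order $\deg(f) = 5$, form the multiset $\{5,\,5{-}\deg h_1,\,5{-}\deg h_2,\,0\}$; matching this against the osculating orders $\{0,2,3,5\}$ at $\infty$, and using $\deg h_i \ge \nu_0(h_i)$, pins down $(\deg h_1,\deg h_2,\deg h_3) = (2,3,5)$. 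Consequently each $h_i$ is a pure monomial $c_i z^{\nu_0(h_i)}$; the contact equation~\eqref{eq: h-ode} imposes one linear relation among the $c_i$, which can be absorbed by rescaling the $v_i$. The result is precisely Example~\ref{ex: ratl-contact-of-all-degrees} with $(p,q) = (2,3)$, which is unbranched. The two $g$'s produced by the two profiles are projectively inequivalent, since one is branched and the other is not.

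The step I expect to be the main obstacle is the rigidity argument in the second profile: one must leverage the osculating-order data at \emph{both} $z = 0$ and $z = \infty$ simultaneously, together with the sharp degree bound $\deg(f) = 5$, to conclude that the basis adapted at $0$ is also adapted at $\infty$ (which succeeds here because the osculating flags at the two points are in general position) so that the monomial form of the $h_i$ is forced with no room for subtler cancellations.
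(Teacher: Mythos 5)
Your overall architecture coincides with the paper's: pass to the Klein-corresponding contact curve, use the Pl\"ucker relations to reduce to the two profiles $(r_1(f),r_2(f))=(0,2)$ and $(2,0)$, and quote Proposition~\ref{prop: f_of_deg_4} for the first. Where you genuinely diverge is in the second profile. The paper proceeds by explicit computation: it first disposes of $R_1(f)=2\cdot p$ by writing out $f_2$ and checking that its coefficients span all of $\Lambda^2(\bbC^4)$, and then, for $R_1(f)=p+q$ distinct, normalizes $f=[(1+az)\,v^0+z^2\,v^2+z^3\,v^3+(bz^4+z^5)\,v^5]$ and shows by inspection of $f_2$ that degeneracy forces $a=b=0$. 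You instead argue intersection-theoretically. Your exclusion of $R_1(f)=2\cdot p$ via $r_3=r_1$ (Proposition~\ref{prop: f_to_f2}), giving $a_3(p)=7>5=\deg f$ against the bound $a_3(p)\le\deg f$ for the osculating hyperplane, is correct and noticeably cleaner than the paper's computation. Your identification of the surviving curve with Example~\ref{ex: ratl-contact-of-all-degrees} for $(p,q)=(2,3)$ is also the right endpoint.

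The one step that does not work as literally written is the ``matching of multisets'' at $z=\infty$, and you have correctly flagged it as the crux. In a basis adapted only at $z=0$, the functions $h_i=P_i/P_0$ are rational, not polynomial ($P_0$ vanishes wherever the curve meets the hyperplane spanned by $v_1,v_2,v_3$), so ``$\deg h_i$'' and the multiset $\{5,5{-}\deg h_1,5{-}\deg h_2,0\}$ are not yet meaningful; moreover the orders at $\infty$ of the coordinates of an arbitrary basis form a multiset that is only dominated by $\{0,2,3,5\}$, not equal to it, and the entry $0$ in your multiset already presumes $\deg h_3=5$. The repair is the one you gesture at, and it does go through: choose the basis adapted to both osculating flags at once ($v_1$ spanning $E_2(0)\cap E_3(\infty)$, $v_2\in E_3(0)\cap E_3(\infty)$ off $E_2(0)$, $v_3$ completing $E_3(\infty)$). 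This is possible because the two flags are opposite: a hyperplane containing $E_i(0)+E_j(\infty)$ meets the curve with multiplicity at least $a_i(0)+a_j(\infty)$, which exceeds $\deg f=5$ in every relevant case once one knows $a_3=5$ at both points --- and that last fact again requires $r_3=r_1$ from Proposition~\ref{prop: f_to_f2}, i.e., the contact hypothesis, so it must be invoked explicitly here. With this choice $\xi^0$ cuts out the osculating hyperplane at $\infty$, which meets the degree-$5$ curve only at $\infty$ and with multiplicity $5$; hence $P_0$ is constant, the $h_i$ are polynomials, and the vanishing orders at $0$ and $\infty$ then do force the monomial form. So your route closes, but only after this flag-transversality lemma is actually proved; as stated it is the missing link, whereas the paper's computational route avoids the issue entirely at the cost of being less conceptual.
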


\begin{proof}
Let $g:\bbP^1\to\bbQ^3$ be a nonlinear null curve of degree~$6$
and let $f:\bbP^1\to\bbP^3$ be the Klein-corresponding nonlinear
contact curve.  From the formulae above,
$$
6 = \deg g = 4 + r_1(f) + r_2(f).
$$
Since $r_2(f)$ is even, there are two possibilities:
$\bigl(r_1(f),r_2(f)\bigr)$ is either $(0,2)$ or $(2,0)$.

First, if $\bigl(r_1(f),r_2(f)\bigr) = (0,2)$, 
then $\deg f = 3 + 0 + 1 = 4$,
and, by Proposition~\ref{prop: f_of_deg_4}, 
this $f$ is unique up to symplectic equivalence.  
In this case, $g = f_2$, since $R_1(g) = R_2(f) = p+q$
where $p,q\in\bbP^1$ are distinct, 
$g$ has two branch points of order~$1$.

Second, if $\bigl(r_1(f),r_2(f)\bigr) = (2,0)$
then $\deg f = 3 + 2 + 0 = 5$, and $R_1(f)=p+q$
where $p,q\in\bbP^1$ may be equal.

In the special case when $R_1(f) = 2{\cdot}p$, 
choose a meromorphic function $z$ on~$\bbP^1$ 
that has a single pole at $p$, and write
$$
f = \bigl[ \,v^0+z\,v^1+z^2\,v^2+z^3\,v^3+z^4\,v^4+z^5\,v^5\,\bigr].
$$
where $v^0,\ldots,v^5$ span $\bbC^4$ and $v^0$ and $v^5$ are not zero.  
The condition $R_1(f) = 2{\cdot}p$ implies that $v^3$ and $v^4$
are multiples of $v^5$, and so, by replacing $z$ by $z+c$
for an appropriate constant, it can be assumes that $v^4=0$, so that
$$
f = \bigl[ \,v^0 + z\,v^1 + z^2\,v^2 + (a\,z^3 + z^5)\, v^5\,\bigr].
$$
where $a$ is a constant.  Thus,
$$
\begin{aligned}
g = f_2 
&= \bigl[\,v^0{\w}v^1+2z\,v^0{\w}v^2 +z^2\,(3a\,v^0{\w}v^5+v^1{\w}v^2)
        +2az^3\,v^1{\w}v^5 \\
      &\qquad + 3z^4\,(5v^0{\w}v^5 + a v^2{\w}v^5)  
                +  4z^5\,v^1{\w}v^5 + 3z^6\,v^2{\w}v^5\,\bigr]
\end{aligned}              
$$
By inspection, whatever the value of~$a$, 
the seven coefficients of $z^k$ in this expression
span the entire $6$-dimensional space $\Lambda^2(\bbC^4)$.
Thus, $f$ is not a contact curve 
for any symplectic structure on~$\bbC^4$.

Supposing, instead, that $R_1(f) = p+q$, 
where $p,q\in\bbP^1$ are distinct,
let $z$ be a meromorphic function on~$\bbP^1$
with a simple pole at $p$ and a zero at~$q$.  
Then $f$ takes the form
$$
f = \bigl[ \,(1+az)\,v^0 + z^2\,v^2 + z^3\, v^3 + (bz^4+z^5)\,v^5\,\bigr].
$$
for some constants $a$ and $b$, 
where $(v^0,v^2,v^3,v^5)$ a basis of~$\bbC^4$.
Thus,
$$
\begin{aligned}
g = f_2 
&= \bigl[\,2v^0{\w}v^2+z\,(a\,v^0{\w}v^2+3\,v^0{\w}v^3) 
           +z^2\,(2a\,v^0{\w}v^3+4b\,v^0{\w}v^5)\\
&\qquad    +z^3\,\bigl((5+3ab)v^0{\w}v^5 + v^2{\w}v^3) 
           +z^4\,(4a\,v^0{\w}v^5 + 2b\, v^2{\w}v^5)  \\
 &\qquad  +z^5\,(3\,v^2{\w}v^5+ b\,v^3{\w}v^5 ) + 2z^6\,v^3{\w}v^5\,\bigr]
\end{aligned}              
$$
By inspection, whenever either $a$ or $b$ is nonzero,  
$g=f_2$ is linearly full in $\Lambda^2(\bbC^4)$, 
and, hence, $f$ is not contact 
for any symplectic structure on~$\bbC^4$. 

Meanwhile, if $a = b = 0$, then the formula for $g$ simplifies to
$$
g = \bigl[\,2\,v^0{\w}v^2+3z\,v^0{\w}v^3 
            +z^3\,\bigl(5\,v^0{\w}v^5 + v^2{\w}v^3) 
            +3z^5\,v^2{\w}v^5 + 2z^6\,v^3{\w}v^5\,\bigr]
$$
so that $g(\bbP^1)$ is linearly full in the $5$-dimensional 
subspace $W\subset\Lambda^2(\bbC^4)$ that is annihilated by
the symplectic form
$$
\beta = \xi_0\w\xi_5 - 5\,\xi_2\w\xi_3
$$
where $(\xi_0,\xi_2,\xi_3,\xi_5)$ is the basis of $(\bbC^4)^*$
 dual to the basis $(v^0,v^2,v^3,v^5)$ of $\bbC^4$.
Hence, $f$ is contact and $g:\bbP^1\to\bbQ^3\subset\bbP(W)$
is an unbranched (since $r_2(f) = 0$) null curve of degree~$6$.

This argument establishes the uniqueness up to projective
equivalence of such an $f:\bbP^1\to\bbP^3$ of degree~$5$ 
with $R_1(f)=p+q$ and $R_2(f)=0$ 
and hence the uniqueness up to equivalence 
of an unbranched null curve $g:\bbP^1\to\bbQ^3$ of degree~$6$. 
\end{proof}

\begin{remark}[Reducibility of a moduli space]
Note that the two corresponding rational contact curves are
\be
f = [1, z, z^3, z^4],
\ee
which has $g=f_2$ branched at $z=0$ and $z=\infty$, and
\be
f = [1, z^2, z^3, z^5]
\ee
which has $g = f_2$ unbranched, though $f$ itself is branched
at $z=0$ and $z=\infty$.

In each case, the projective subgroup $H\subset\SL(4,\bbC)$
that stabilizes $f$ has dimension $1$ (and has two components).
Consequently, the moduli space of such contact curves
for a given symplectic structure~$\beta$ is of the form
$\Sp(\beta)/H$ and hence has dimension~$9$.

Thus, the moduli space of nonlinear rational null curves in~$\bbQ^3$
of degree $6$ is disconnected.  Even when compactified using
geometric invariant theory, this moduli space will necessarily 
be reducible, being the union of two irreducible varieties 
of dimension~$9$. 
\end{remark}

\subsection{Degree $7$}
Finally, we treat the unbranched case in degree~$7$.

\begin{proposition}\label{prop: unbranched-ratl-deg-7}
There is no unbranched nonlinear null curve $g:\bbP^1\to\bbQ^3$
of degree~$7$.
\end{proposition}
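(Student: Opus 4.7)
The plan is to combine the Klein correspondence with a gap-sequence bound and a Plücker-degree obstruction. By Example~\ref{ex: unbranchedrationalnullcurves} and the Klein correspondence, any such $g$ would arise as $g = f_2$ for a nondegenerate contact curve $f:\bbP^1\to\bbP^3$ of degree $6$ with $r_1(f) = 3$, $r_2(f) = 0$, and (by Proposition~\ref{prop: f_to_f2}) $r_3(f) = 3$; it suffices to rule out the existence of such $f$. At any ramification point $p$ with $r_1(p) = k$, the gap sequence is $(0, k{+}1, k{+}2, 2k{+}3)$, and $a_3(p)$ is the multiplicity at $p$ of the pullback under $f$ of a defining form of the osculating hyperplane at $p$; since this pullback has total degree $\deg(f) = 6$, we get $2k{+}3 \leq 6$, forcing $k \leq 1$. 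Hence $R_1(f) = p_1 + p_2 + p_3$ for three distinct points, which I would normalize to $\{0, 1, \infty\}$ via $\PGL(2,\bbC)$.

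In a basis of $V=\bbC^4$ adapted at $\infty$, write $f = [\phi_0 : \phi_1 : \phi_2 : \phi_3]$ with $\deg \phi_i = (6,4,3,1)$; the subspace $L=\mathrm{span}(\phi_0,\phi_1,\phi_2,\phi_3)\subset\bbC[z]_{\le 6}$ then has gap sequence $(0,2,3,5)$ at each of $0, 1, \infty$. The elements of maximal vanishing at the three marked points are determined up to scaling: $\psi = z^5(1{+}bz)$, $\chi = (z{-}1)^5(1{+}\delta(z{-}1))$, and $\xi = z{+}e$; a fourth element $\eta$ of degree $3$ completes a basis of $L$. The condition that $a_1 = 2$ at each of $0$ and $1$ (no element of $L$ has vanishing order exactly $1$ there) forces ``derivative $=$ scalar $\cdot$ value'' linear relations on $L$ at those points; combined with the existence of a degree-$4$ polynomial in $L$ (needed for $a_2(\infty) = 3$), these give $\delta = b/(1{+}6b)$ and $e = (\delta{-}1)/(5{-}6\delta) = -(4{+}5b)/(5{+}6b)$, hence $6b^2{+}6b{+}1 = 0$. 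This yields two values of $b$, interchanged by an $S_3$-symmetry permuting $\{0,1,\infty\}$; the remaining coefficients of $\eta$ are uniquely determined after gauge-fixing $\eta$ by adding an appropriate multiple of $\xi$.

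By the second proposition of Section~$3$ (the converse of Proposition~\ref{prop: f_to_f2}), $f$ is contact for some symplectic $\beta$ if and only if the six Plücker polynomials $p_{ij}(z) = \phi_i\phi_j' - \phi_j\phi_i'$ admit a nontrivial $\bbC$-linear relation. Since $\deg p_{ij} = \deg\phi_i + \deg\phi_j - 1$, the six degrees are $(9,8,6,6,4,3)$, and only $p_{03}$ and $p_{12}$ share a degree: any such linear relation must be a proportionality $p_{03} = c\cdot p_{12}$ for some $c \in \bbC^*$. The concluding step is to verify by direct computation at each of the two values of $b$ that this proportionality fails, which is the main obstacle; however, the degree analysis has collapsed what could be a complex linear-dependence problem into a single scalar comparison between two degree-$6$ polynomials whose coefficients are explicit rational expressions in $b$.
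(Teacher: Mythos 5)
Your reduction of the first two cases is genuinely nicer than the paper's: the paper disposes of $R_1(f)=3\cdot p$ and $R_1(f)=2\cdot p+q$ by writing out $f_2$ explicitly in each case and checking that its coefficients span all of $\Lambda^2(\bbC^4)$, whereas your observation that $a_3(p)=2\,r_1(p,f)+3$ (using $r_2=0$ and $r_3(p,f)=r_1(p,f)$) is the local intersection multiplicity of $f$ with its osculating hyperplane at $p$, hence is at most $\deg f=6$, kills both cases in one line. The normalization of the remaining case ($R_1(f)$ a sum of three distinct points, sent to $\{0,1,\infty\}$) and the derivation of $6b^2+6b+1=0$ from the adapted elements $\psi,\chi,\xi$ also check out. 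But the argument has a genuine gap at the decisive step, in two respects.

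First, the reduction of the contact condition to a proportionality $p_{03}=c\,p_{12}$ is incorrect. The degree argument shows only that a linear relation $\sum c_{ij}\,p_{ij}=0$ must have $c_{01}=c_{02}=0$ and $c_{03},c_{12}$ both nonzero; it does not force $c_{13}=c_{23}=0$, because $p_{13}$ and $p_{23}$ have degrees $4$ and $3$ and can enter the relation through the lower-order coefficients of $c_{03}p_{03}+c_{12}p_{12}$. Indeed, the paper's own proposition on curves with degenerate $f_2$ exhibits the relation, in a basis adapted at a single point, as $c_1h_3'+c_2(h_1h_2'-h_2h_1')+c_3(h_1h_3'-h_3h_1')+c_4(h_2h_3'-h_3h_2')=0$ with only $c_1c_2\neq0$ guaranteed. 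So the correct final test is whether the four polynomials $p_{03},p_{12},p_{13},p_{23}$ (degrees $6,6,4,3$) are linearly dependent, not whether two of them are proportional. Second, and more seriously, that test --- in whichever form --- is precisely the computation you have not performed; you flag it yourself as ``the main obstacle.'' The paper's treatment of this very case shows that this is where all the difficulty sits: there, the analogous system of vanishing-minor conditions \emph{does} admit a solution, which is excluded only by the further observation that the resulting $F$ satisfies $F(1)=0$ and so defines a curve of degree at most $5$. (In your parametrization one can check that a base point would force $e=1/b$, hence $b=-1$, which is not a root of $6b^2+6b+1$, so your two candidates are honest degree-$6$ curves and a verified failure of linear dependence would indeed finish the proof; but until that dependence computation is actually carried out for both roots, the proposition is not proved.)
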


\begin{proof}
Suppose that an unbranched nonlinear null curve~$g:\bbP^1\to\bbQ^3$
of degree~$7$ exists 
and let $f:\bbP^1\to\bbP^3$ be the Klein-corresponding contact curve.
By the formulae~\eqref{eq: unbranchedrationalnullcurves} 
of Example~\ref{ex: unbranchedrationalnullcurves}, it follows
that $f$ has degree~$6$ and satisfies $r_1(f) = 3$ and $r_2(f)=0$.
There are three cases to consider, depending on the structure of $R_1(f)$.

First, suppose that $R_1(f) = 3{\cdot}p$ for some $p\in\bbP^1$.
Choose a meromorphic $z$ on $\bbP^1$ with a simple pole at $p$
(and no other poles). Then $f$ takes the form
\be
f = [\,v^0+z\,v^1+z^2\,v^2+z^3\,v^3+z^4\,v^4+z^5\,v^5+z^6\,v^6\,]
\ee 
for some $v^0,\ldots,v^6\in\bbC^4$ with $v_0$ and $v_6$ nonzero.
Since $R_1(f) = 3{\cdot}p$, 
it follows that $v^3$, $v^4$, and $v^5$ 
are multiples of $v^6$.  By replacing $z$ by $z+c$ 
for some constant $c$, I can arrange that $v^5=0$,
so I do that.  Then $f$ takes the form
\be
f = [\,v^0 + z\,v^1 + z^2\,v^2 + (a\,z^3+b\,z^4+z^6)\,v^6\,]
\ee
for some constants $a$ and $b$, where $(v^0,v^1,v^2,v^6)$ are
a basis for $\bbC^4$. Then
\be
\begin{aligned}
g = f_2 &= [\,v^0{\w}v^1 + 2z\,v^0{\w}v^2 
     + z^2\,(3a\,v^0{\w}v^6+ v^1{\w}v^2)\\
     &\qquad + z^3\,(4b\,v^0+2a\,v^1){\w}v^6
             + z^4\,(3b\,v^1- a\,v^2){\w}v^6\\
     &\qquad + z^5\,(2b\,v^2- 6\,v^0){\w}v^6 + 5z^6\,v^1{\w}v^6
             + 4z^7\,v^2{\w}v^6].
\end{aligned}
\ee
By inspection the coefficients of the different powers of $z$
span $\Lambda^2(\bbC^4)$, no matter what the values of $a$ and $b$. 
Hence, this curve is linearly full in $\bbP(\Lambda^2(\bbC^4))$, 
and this $f$ is not contact for any symplectic structure on~$\bbC^4$.

Second, suppose that $R_1(f) = 2{\cdot}p + q$ for some $p,q\in\bbP^1$
that are distinct.
Choose a meromorphic $z$ on $\bbP^1$ with a simple pole at $p$
and a simple zero at $q$.  Then, because $R_1(f) = 2{\cdot}p + q$,
it follows that $f$ can be written in the form
\be
f = [\,(1+az)\,v^0 + z^2\,v^2 + z^3\,v^3 + (b\,z^4+c\,z^5+z^6)\,v^6\,]
\ee
for some constants $a$, $b$, and $c$ 
and vectors $v^0$, $v^2$, $v^3$, $v^6$ that form a basis of~$\bbC^4$.
Then
\be
\begin{aligned}
g = f_2 &= [\,2v^0{\w}v^2 + z\,v^0{\w}(a\,v^2+3\,v^3) 
     + z^2\,v^0{\w}(4b\,v^6+ 2a\,v^3)\\
     &\qquad + z^3\,((3ab{+}5c)\,v^0{\w}v^6+v^2{\w}v^3)
             + z^4\,((4ac{+}6)\,v^0+2b\,v^2){\w}v^6\\
     &\qquad + z^5\,(5a\,v^0+3c\,v^2 + b\,v^3){\w}v^6\\
     &\qquad + z^6\,(2c\,v^3+4\,v^2){\w}v^6
             + 3z^7\,v^3{\w}v^6].
\end{aligned}
\ee
Looking at the coefficients of the $0$-th, $1$-st, $7$-th, 
and $6$-th powers of $z$ in this formula, it follows that $f_2$
lies linearly fully in a space $W\subset\Lambda^2(\bbC^4)$ 
that contains $v^0{\w}v^2$, $v^0{\w}v^3$, $v^3{\w}v^6$, 
and $v^2{\w}v^6$, no matter what the values of $a$, $b$, and $c$.
The space $W$ must also contain the elements in the set
$$
\{\,a\,v^0{\w}v^6,\ b\,v^0{\w}v^6,\ (4ac+6)\,v^0{\w}v^6,\ 
  v^2{\w}v^3 + (3ab+5c)\,v^0{\w}v^6\}.
$$
No matter what the values of $a$, $b$, and $c$ are, the first three
elements will span the multiples of $v^0{\w}v^6$, and this,
combined with the fourth element, will force $W$ to contain $v^2{\w}v^3$
as well.  Thus $W = \Lambda^2(\bbC^4)$, 
implying that $f_2$ is nondegenerate, which is impossible if $f$
is to be a contact curve.  Thus, this case is also impossible.

Third, and finally, suppose that $R_1(f) = p+q+s$ where $p,q,s\in\bbP$
are distinct.  Let $z$ be the meromorphic function on $\bbP^1$
that has a pole at $p$, a zero at $q$ and satisfies $z(s) = 1$.  
(This uniquely specifies $z$.)  Then $f = [F(z)]$ where 
\be
F(z) = v^0 + z\,v^1 + z^2\,v^2 + z^3\,v^3 + z^4\,v^4 + z^5\,v^5 + z^6\,v^6 
\ee
and where $v^0,\ldots,v^6$ span~$\bbC^4$ and $v^0$ and $v^6$ are nonzero.
Moreover, because $p$ and $q$ are branch points of $f$, it follows
that $v^0{\w}v^1 = v^5{\w}v^6 = 0$, so that we must actually have
\be
F(z) = (1+az)\,v^0 + z^2\,v^2 + z^3\,v^3 + z^4\,v^4 + (bz^5+ z^6)\,v^6, 
\ee
for some constants $a$ and $b$.   Moreover, there can be only one
linear relation among the $5$ vectors $v^0,v^2,v^3,v^4,v^6$. 
Also, because $f$ must have degree~$6$, we cannot have $F(z_0)=0$
for any $z_0\in\bbC$, since then $F(z)/(z-z_0)$ would be a curve of
degree~$5$, forcing $f$ to have degree at most~$5$. 

Now, it turns out that it greatly simplifies the argument below 
to make a change of basis so that $F$ is written in the form
\be
\begin{aligned}
F(z) = (1+az)\,v^0 &+ z^2\,\bigl((v^2{-}v^3{-}(2a{+}3)v^0\bigr) \\
&+ z^3\,\bigl(2v^3{+}(a{+}2)v^0{+}(b{+}2)v^6\bigr) \\
&+ z^4\,\bigl(v^4{-}v^3{-}(2b{+}3)v^6\bigr) + (bz^5+ z^6)\,v^6, 
\end{aligned}
\ee
as can clearly be done.  The reason this is useful is that
the condition that $f$ have a branch point at $s$,
which is where $z = 1$, is equivalent
to the condition that $F(1)\w F'(1) = 0$, and computation now
shows that
$$
0 = F(1)\w F'(1) = 2\,v^2\w v^4.
$$ 
Hence $v^2$ and $v^4$ must be linearly dependent.  
Since there can only be one linear relation 
among the $5$ vectors $\{v^0,v^2,v^3,v^4,v^6\}$, it follows
that $v^2$ and $v^4$ must be multiples, not both zero, of a single
vector.  Consequently, after a renaming and a choice of two 
numbers $p$ and $q$, not both zero, we can write $F$ in the form
\be
\begin{aligned}
F(z) = (1+az)\,v^0 &+ z^2\,\bigl(p\,v^2{-}v^3{-}(2a{+}3)v^0\bigr) \\
&+ z^3\,\bigl(2v^3{+}(a{+}2)v^0{+}(b{+}2)v^6\bigr) \\
&+ z^4\,\bigl(q\,v^2{-}v^3{-}(2b{+}3)v^6\bigr) + (bz^5+ z^6)\,v^6, 
\end{aligned}
\ee 
where now $v^0,v^2,v^3,v^6$ are a basis of $\bbC^4$, and $a$, $b$,
$p$, and $q$ are constants, with $p$ and $q$ not both zero.

Now, let
$$
G(z) = \frac{F(z)\w F'(z)}{z(z-1)}
= G_0 + G_1\,z + G_2\,z^2 + \cdots + G_7\,z^7,
$$
so that $g = f_2 = [G(z)]$.  We must determine the conditions
on $a$, $b$, $p$, and $q$ in order that $g(\bbP^1)$ not lie linearly
fully in $\bbP(\Lambda^2(\bbC^4))$, which is that the eight
vectors $G_0,\ldots,G_7$ in $\Lambda^2(\bbC^4)$ should only span
a vector space of dimension at most $5$.

Let $B^1 = v^0{\w}v^2$, $B^2 = v^0{\w}v^3$, $B^3 = v^0{\w}v^6$,
$B^4 = v^2{\w}v^3$, $B^5 = v^2{\w}v^6$, and $B^6= v^3{\w}v^6$.
Then $B^1,\ldots,B^6$ form a basis of $\Lambda^2(\bbC^4)$. 
Thus, there is a $8$-by-$6$ matrix $M_{aj}$ 
such that $G_a = \sum_j\,M_{aj}\,B^j$.  
Calculation now yields that $M$ is
$$
\begin{pmatrix}
-2p&2&0&0&0&0\\
-p(a{+}2)&a{-}4&-3(b{+}2)&0&0&0\\
-(ap{+}2p{+}4q)&-3a&-2ab{-}4a{+}5b{+}6&0&0&0\\
-q(3a{+}4)&3a{+}4&6ab{+}9a{+}3b{+}12&-2p&-p(b{+}2)&b{+}2\\
q(a{+}2)&-(a{+}2)&-6ab{-}3a{-}9b{-}12&-2q&p(3b{+}4)&-(3b{+}4)\\
0&0&2ab{-}5a{+}4b{-}6&0&bq{+}4p{+}2q&3b\\
0&0&3(a{+}2)&0&q(b{+}2)&4{-}b\\
0&0&0&0&2q&-2
\end{pmatrix} 
$$

Now, in order that $g$ not be branched at $z=0$, we must have $G_0$
and $G_1$ linearly independent, i.e., the first two rows of $M$ must
be of rank $2$ and inspection shows that this requires that at least
one of $p$ and $b{+}2$ must be nonzero.  Similarly, because $g$
is not branched at $z=\infty$, at least one of $q$ and $a{+}2$
must be nonzero. 

In order for the rank of $M$ to be at most $5$, all of the 
$6$-by-$6$ minors of $M$ must be zero.  By computation,
the determinant of the first 6 rows is $-48\bigl( (3p{+}q)b+4p+2q\bigr)^3$
while the determinant of the last 6 rows 
is  $-48\bigl( (p{+}3q)a+2p+4q\bigr)^3$.  Thus, we must have
$$
(3p{+}q)b+4p+2q = (p{+}3q)a+2p+4q = 0.
$$
Recall that $p$ and $q$ cannot simultaneously vanish.  It is now
apparent that $3p{+}q$ cannot be zero either, since the above
equations would then imply that $4p{+}2q = 0$, forcing $p=q=0$,
which cannot happen.  Similarly, $p{+}3q$ cannot be zero.  Thus,
we can solve for $a$ and $b$ in the form
$$
a = -\frac{2p{+}4q}{p{+}3q}\qquad\text{and}\qquad
b = -\frac{4p{+}2q}{3p{+}q}\,.
$$
From these formulae, we can see that, if $q$ were zero, then $a$
would be $-2$, but $q = a{+}2 = 0$ is not allowed.  Hence $q$ is
non-zero.  Similarly $p$ must be nonzero.

Finally, computing the determinant of the $6$-by-$6$ minor of~$M$
obtained by deleting the third and sixth rows of~$M$ yields
$$
\frac{8640\,pq\,(p+q)^3}{(p + 3q)(3p + q)}\,.
$$
Consequently, since $p$ and $q$ cannot be zero, 
it must be that $p+q=0$, which implies that $a = -1$ and $b=-1$.
Further, by scaling $v^2$, we can arrange that $p=1$ and $q=-1$.
Thus, the only possibility for $f = [F]$ is to have
\be
\begin{aligned}
F(z) = (1{-}z)\,v^0 &+ z^2\,\bigl(v^2{-}v^3{-}v^0\bigr) \\
&+ z^3\,\bigl(2v^3{+}v^0{+}v^6\bigr) \\
&- z^4\,\bigl(v^2{+}v^3{+}v^6\bigr) + (z^6{-}z^5)\,v^6, 
\end{aligned}
\ee 
However, since $F(1) = 0$, it follows that $f = [F(z)/(z{-}1)]$
can only have degree $5$ at most.  This contradiction
shows that the desired $f$ does not exist.

Hence, there is no unbranched null curve $g:\bbP^1\to\bbQ^3$
of degree~$7$, as claimed.
\end{proof}

\begin{remark}
There does exist a \emph{branched}
nonlinear null curve $g:\bbP^1\to\bbQ^3$ of degree~$7$.  
The corresponding contact curve $f:\bbP^1\to\bbP^3$ 
is of the form
$$
f = [\,(1{-}5z^2)\,v^0+(z{-}3z^2)\,v^1+(z^4{-}3z^3)\,v^4+(z^5{-}5z^3)\,v^5\,]
$$
for a meromorphic parameter $z$ on~$\bbP^1$ and a basis~$v^0,v^1,v^4,v^5$
of~$\bbC^4$.  This $f$ satisfies $R_1(f) = s$,
where $z(s)=1$, and $R_2(f) = p+q$, where $p$ is the pole of $z$ and $q$
is the zero of $z$.  

It can be shown~\cite{BandW1} that, up to projective equivalence,
this is the unique contact curve~$f:\bbP^1\to\bbP^3$ 
with $r_1(f)=1$ and $r_2(f)=2$.

Since any nonlinear null curve~$g:\bbP^1\to\bbQ^3$ 
of degree~$7$ must satisfy $r_1(f)+r_2(f) = 7-4 = 3$
and since $r_2(f)$ must be even, it follows that such a curve,
which must be branched by Proposition~\ref{prop: unbranched-ratl-deg-7}, 
must have $r_1(f) = 1$ and $r_2(f)=2$. 

Hence all of the nonlinear rational null curves~$g:\bbP^1\to\bbQ^3$
of degree~$7$ form a single $\Sp(2,\bbC)$-orbit of dimension~$10$.
\end{remark}

\bibliographystyle{hamsplain}

\providecommand{\bysame}{\leavevmode\hbox to3em{\hrulefill}\thinspace}

\end{document}